\date{}
\renewcommand{\uppercasenonmath}[1]{}
\theoremstyle{plain}
\theoremstyle{plain}
\newtheorem{theorem}{Theorem}[section]
\newtheorem{proposition}[theorem]{Proposition}
\newtheorem{lemma}[theorem]{Lemma}
\newtheorem{corollary}[theorem]{Corollary}
\newtheorem{example}[theorem]{Example}
\newtheorem*{open question}{Open Question}
\newtheorem{definition}[theorem]{Definition}
\theoremstyle{definition}
\newtheorem*{acknowledgement}{Acknowledgement}
\theoremstyle{remark}
\newtheorem{remark}[theorem]{Remark}
\newcommand{\ra}{\rightarrow}
\newcommand{\Tor}{\mbox{\rm Tor}}
\newcommand{\coker}{\mbox{\rm coker}}
\newcommand{\Prufer}{Pr\"{u}fer}
\newcommand{\Q}{\mathcal{Q}}
\newcommand{\prodi}{\prod_{i\in \Lambda}}
\def\ra{\rightarrow}
\def\GV{{\rm GV}}
\def\tor{{\rm tor}}
\def\Hom{{\rm Hom}}
\def\Ext{{\rm Ext}}
\def\Tor{{\rm Tor}}
\def\fPD{{\rm fPD}}
\def\Ker{{\rm Ker}}
\def\coker{{\rm coker}}
\def\Im{{\rm Im}}
\def\Cok{{\rm Cok}}
\def\Nil{{\rm Nil}}
\def\GV{{\rm GV}}
\def\Max{{\rm Max}}
\def\DW{{\rm DW}}
\def\WQ{{\rm WQ}}
\def\T{{\rm T}}
\def\E{{\rm E}}
\def\DQ{{\rm DQ}}
\def\Spec{{\rm Spec}}
\newcommand{\m}{\frak{m}}
\newcommand{\p}{\frak{p}}
\def\Min{{\rm Min}}
\begin{document}
\begin{center}
{\large  \bf  On $\tau_q$-flatness and $\tau_q$-coherence}

\vspace{0.5cm}  Xiaolei Zhang$^{a}$, Wei Qi$^{a}$ \\

{\footnotesize a.\ School of Mathematics and Statistics, Shandong University of Technology, Zibo 255000, China\\

E-mail: qwrghj@126.com\\}
\end{center}

\bigskip
\centerline { \bf  Abstract}
\bigskip
\leftskip10truemm \rightskip10truemm \noindent

In this paper, we first introduce and study the notions of $\tau_q$-flat modules. And then study the rings over which all modules are $\tau_q$-flat (which is called $\tau_q$-VN regular rings). In particular, we show that a ring $R$ is a $\tau_q$-VN regular ring if and only if $\T(R[x])$ is a von Neumann regular ring, if and only if $R$ is a reduced ring with $\Min(R)$ compact. Finally, we obtain the Chase theorem for  $\tau_q$-coheret rings: a ring $R$ is $\tau_q$-coherent if and only if any direct product of $R$ is $\tau_q$-flat if and only if any direct product of flat $R$-modules is $\tau_q$-flat. Some examples are provided to compare with the known conceptions.
\vbox to 0.3cm{}\\
{\it Key Words:} small finitistic dimension; $\tau_q$-flat module; $\tau_q$-VN regular ring; $\tau_q$-coherent ring.\\
{\it 2010 Mathematics Subject Classification:} 13C11; 13B30; 13D05.

\leftskip0truemm \rightskip0truemm
\bigskip

\section{Introduction}
Throughout this paper, we always assume $R$ is a commutative ring with identity. For a ring $R$, we denote by $\T(R)$ the total quotient ring of $R$, $\Min(R)$ the set of all minimal prime ideals of $R$, $\Nil(R)$ the nil radical of $R$ and $\fPD(R)$ the small finitistic dimension of $R$ (see \cite{z-fpd}).

The notion of $w$-operations was first introduced by Wang et al. \cite{fm97} to build a connection of some non-coherent domains (e.g. resp., Krull domains, PvMDs and Strong Mori domains) with classical domains (e.g. resp., Dedekind domains, \Prufer\ domains and Noetherian domains). In 2011, Yin et al. \cite{ywzc11} extended $w$-operations to commutative rings with zero-divisors. This makes it possible to study modules over commutative rings in terms of star operations. In 2015,  Kim et al.  \cite{KW14} extended the classical notion of flat modules to that of $w$-flat modules. And then Wang et al. \cite{fk14} showed that a ring $R$ is a von Neumann regular ring if and only if every $R$-module is $w$-flat. Later, Wang et al. \cite{WQ15} constructed the $w$-weak global dimensions of commutative rings, and proved that PvMDs are exactly integral domains with $w$-weak global dimensions at most $1$. It is well-known that \Prufer\ domains are coherent domains. Correspondingly,  PvMDs should be some $w$-analogue of coherent ring. So early in 2010,  Wang \cite{fk10} introduced the notion of $w$-coherent rings and then  gave some examples by the Milnor square in \cite{fk12}. For module-theoretic study of $w$-coherent rings, Wang and the author et al. \cite{WQ20,zw-c-wch-2,zxl20} characterized $w$-coherent rings using $w$-flat modules until recently. Actually, they gave the Chase theorem for $w$-coherent rings.

Recently, Zhou et al. \cite{ZDC20} introduced the notion of $q$-operations over commutative rings utilizing finitely generated semi-regular ideals.  $q$-operations are semi-star operations which are weaker than $w$-operations.  The authors in \cite{ZDC20} also proposed $\tau_q$-Noetherian rings (i.e. a ring in which any ideal is $\tau_q$-finitely generated) and study them via module-theoretic point of view, such as $\tau_q$-analogue of the Hilbert basis theorem, Krull's principal ideal theorem, Cartan-Eilenberg-Bass theorem and Krull intersection theorem. The main motivation of this paper is to introduce the notions of $\tau_q$-versions of flat modules and coherent rings, and then give some module-theoretic studies of these notions.

As our work involves $q$-operations, we give a brief introduction on them. For more details, refer to  \cite{ZDC20}. Recall that an ideal $I$ of $R$ is said to be \emph{dense} if $(0:_RI):=\{r\in R\mid Ir=0\}$, and \emph{semi-regular} if there exists a finitely generated dense sub-ideal of $I$. The set of all finitely generated semi-regular ideals of $R$ is denoted by $\Q$. Let $M$ be an $R$-module. Denote by
 \begin{center}
{\rm $\tor_{\Q}(M):=\{x\in M|Ix=0$, for some $I\in \Q \}.$}
\end{center}
Recall from \cite{wzcc20} that an $R$-module $M$ is said to be \emph{$\Q$-torsion} (resp., \emph{$\Q$-torsion-free}) if $\tor_{\Q}(M)=M$ (resp., $\tor_{\Q}(M)=0$). The class of $\Q$-torsion modules is closed under submodules, quotients and direct sums, and the class of $\Q$-torsion-free modules is closed under submodules and direct products. A $\Q$-torsion-free module $M$ is called a \emph{Lucas module} if $\Ext_R^1(R/J,M)=0$ for any $J\in \Q$, and the \emph{Lucas envelope} of $M$ is given by
\begin{center}
{\rm $M_q:=\{x\in \E_R(M)|Ix\subseteq M$, for some $I\in \Q \},$}
\end{center}
where $\E_R(M)$ is the injective envelope of $M$ as an $R$-module.
By  \cite[Theorem 2.11]{wzcc20}, $M_q=\{x\in \T(M[x])|Ix\subseteq M$, for some $I\in \Q \}.$
Obviously, $M$ is a Lucas module if and only if $M_q=M$. A \emph{$\DQ$ ring} $R$ is a ring for which every $R$-module is a Lucas module.
By \cite[Proposition 2.2]{fkxs20}, $\DQ$ rings are exactly rings with small finitistic dimensions equal to $0$.
Recall from \cite{ZDC20} that an submodule $N$ of a  $\Q$-torsion free module $M$ is called a $q$-submodule if $N_q\cap M=N$. If an ideal $I$ of $R$ is a $q$-submodule of $R$, then $I$ is also called a $q$-ideal of $R$. A \emph{maximal $q$-ideal} is an ideal of $R$ which is maximal among the $q$-submodules of $R$. The set of all maximal $q$-ideals is denoted by $q$-$\Max(R)$. $q$-$\Max(R)$ is exactly the set of all maximal non-semi-regular ideals of $R$, and thus is non-empty and a subset of $\Spec(R)$ (see
\cite[Proposition 2.5, Proposition 2.7]{ZDC20}). Recall from \cite{z-q0pvmr} that a ring $R$ is called a \emph{$\WQ$-ring} if every ideal in $\Q$ is a $\GV$-ideal, which is equivalent to $w$- and $q$-operations concide.

An $R$-homomorphism $f:M\rightarrow N$ is called to be  a \emph{$\tau_q$-monomorphism} (resp., \emph{$\tau_q$-epimorphism}, \emph{$\tau_q$-isomorphism}) provided that $f_\m:M_\m\rightarrow N_\m$ is a monomorphism (resp., an epimorphism, an isomorphism) over $R_\m$ for any $\m\in q$-$\Max(R)$. By \cite[Proposition 2.7(5)]{ZDC20}, an $R$-homomorphism $f:M\rightarrow N$ is a $\tau_q$-monomorphism (resp., $\tau_q$-epimorphism, $\tau_q$-isomorphism) if  and only if $\Ker(f)$ is  (resp.,  $\Cok(f)$ is, both $\Ker(f)$ and $\Cok(f)$ are) $\Q$-torsion. A sequence of $R$-modules $A\xrightarrow{f} B\xrightarrow{g} C$ is said to be  \emph{$\tau_q$-exact} provided that $A_\m\rightarrow B_\m\rightarrow C_\m$ is  exact as  $R_\m$-modules for any  $\m\in q$-$\Max(R)$. It is easy to verify a sequence  $A\xrightarrow{f} B\xrightarrow{g} C$ is  $\tau_q$-exact if and only if $(\Im(f)+\Ker(g))/\Im(f)$ and $(\Im(f)+\Ker(g))/\Ker(g)$ are both $\Q$-torsion.

\section{Modules over $\T(R[x])$}

 Let $R$ be a commutative ring.  A finitely generated ideal $J$ of $R$ is said to be a \emph{$\GV$-ideal}, denoted by $J\in \GV(R)$, provided that the natural homomorphism $R\rightarrow \Hom_R(J,R)$ is an isomorphism. Naturally, $\GV(R)\subseteq \Q$. A \emph{$\DW$ ring} $R$ is a ring for which the only $\GV$-ideal of $R$ is $R$ itself. Very recently, the author  et al. \cite[Corollary 3.7]{z-fpd} shows that a ring $R$ is a $\DW$-ring if and only if the small finitistic dimensions of $R$ are at most $1$. For more details on $w$-operations, the reader can consult {\cite[Chapters 6,7]{fk16}}.

 For further study of $w$-operations, Wang and Kim {\cite{fk15}} introduced the \emph{$w$-Nagata ring}, $R\{x\}$, of $R$. It is a localization of the polynomial ring $R[X]$  at the multiplicative closed set
 $S_w=\{f\in R[x]\,|\, c(f)\in \GV(R)\},$
\noindent where $c(f)$ always denotes the content of $f$. Note that $R\{x\}$ is a $\DW$-ring for any ring $R$, i.e., $\fPD(R\{x\})\leq 1.$
Later,  Jara \cite{J15} introduced the $\tau$-Nagata ring $Na(R,\tau)$ for a half-centered hereditary torsion theory $\tau$. Now denote by $\tau_q$ the finite type hereditary torsion theory corresponding to the Gabriel topology generated by $\Q$. Then  following \cite{J15}, there exists a Nagata ring $Na(R,\tau_q):=R[x]_{\sum(\tau_q)}$, where $$\sum(\tau_q):= \{f\in R[x]\,|\, c(f)\in \Q\}.$$ By \cite[Exercise 6.5]{fk16}, $\sum(\tau_q)$ is exactly the set of all regular polynomials in $R[x]$. So we have $$Na(R,\tau_q)=\T(R[x])$$ the quotient ring of $R[x]$. By \cite[Proposition 2.3]{J15}, $\{\m\otimes_R\T(R[x])\mid \m\in q$-$\Max(R)\}$ are exactly the set of all maximal ideals of $\T(R[x])$.

\begin{proposition}\label{naga-lucas}
Let $M$ be a $\T(R[x])$-module. Then $M$ is a Lucas $R$-module.
\end{proposition}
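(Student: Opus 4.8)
The plan is to verify the two defining conditions of a Lucas module directly: that $M$ is $\Q$-torsion-free as an $R$-module, and that $\Ext^1_R(R/J,M)=0$ for every $J\in\Q$. The single tool driving both verifications is the following observation. If $J=(a_1,\dots,a_n)\in\Q$, form the polynomial $f=a_1+a_2x+\cdots+a_nx^{n-1}\in R[x]$; its content is $c(f)=(a_1,\dots,a_n)=J\in\Q$, so $f\in\sum(\tau_q)$ is a regular polynomial and hence acts invertibly on every $\T(R[x])$-module, in particular on $M$. Throughout I will use that $R[x]$ is commutative, so the scalars $a_i\in R$ commute with the variable action of $x$ on $M$.

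For $\Q$-torsion-freeness, suppose $z\in M$ satisfies $Jz=0$ for some $J=(a_1,\dots,a_n)\in\Q$; in particular $a_iz=0$ for all $i$. Then $fz=\sum_i a_ix^{i-1}z=\sum_i x^{i-1}(a_iz)=0$, and since $f$ is invertible on $M$ we get $z=f^{-1}(fz)=0$. Hence $\tor_\Q(M)=0$.

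For the $\Ext$ vanishing, apply $\Hom_R(-,M)$ to $0\to J\to R\to R/J\to 0$; since $\Ext^1_R(R,M)=0$, the group $\Ext^1_R(R/J,M)$ is the cokernel of the restriction map $M\cong\Hom_R(R,M)\to\Hom_R(J,M)$, so it suffices to show every $g\in\Hom_R(J,M)$ is multiplication by a fixed element of $M$. Given such $g$, put $m_i:=g(a_i)$; $R$-linearity gives the symmetry $a_jm_i=g(a_ja_i)=a_im_j$. Set $w:=\sum_i x^{i-1}m_i\in M$ and $m:=f^{-1}w\in M$. The crux is the identity $a_jw=\sum_i x^{i-1}(a_jm_i)=\sum_i x^{i-1}(a_im_j)=fm_j$, which yields $a_jm=f^{-1}(a_jw)=m_j=g(a_j)$ for every generator, so $g$ agrees with multiplication by $m$ on all of $J$. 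This proves surjectivity of $M\to\Hom_R(J,M)$ and hence $\Ext^1_R(R/J,M)=0$.

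I expect the only real obstacle to be the bookkeeping in the last step: one must exploit commutativity of $R[x]$ to move the regular element $f$ and its inverse past the $R$-scalars, and check that the auxiliary element $w$ is built so that $a_jw=fm_j$ holds simultaneously for all $j$ — this is exactly where the symmetry $a_jm_i=a_im_j$ coming from $R$-linearity of $g$ is used. Everything else is formal, and no choice-dependence matters since we only need the existence of the divisor $m$.
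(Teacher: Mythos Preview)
Your proof is correct, but it takes a different route from the paper's. Both arguments rest on the same key observation: for $J\in\Q$, the extension $J\cdot\T(R[x])$ is all of $\T(R[x])$ (you witness this by the single regular polynomial $f$ with $c(f)=J$; the paper invokes the regularity of $J[x]$). The divergence is in the second half. The paper uses the Lucas-envelope characterization $M_q=M$: given $n\in E_R(M)$ with $Jn\subseteq M$, it embeds $E_R(M)$ into $E_{\T(R[x])}(M)$ (using flatness of $\T(R[x])$ over $R$) and then argues $n\in\T(R[x])n=(J\cdot\T(R[x]))n\subseteq M$. You instead verify $\Ext^1_R(R/J,M)=0$ by hand, producing for each $g\in\Hom_R(J,M)$ an explicit divisor $m=f^{-1}\sum_i x^{i-1}g(a_i)$ via the symmetry $a_jg(a_i)=a_ig(a_j)$. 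Your approach is more self-contained and computational, avoiding any appeal to injective envelopes over the two rings; the paper's is shorter and more conceptual but leans on the (not entirely trivial) inclusion $E_R(M)\subseteq E_{\T(R[x])}(M)$.
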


\begin{proof} Let $I$ be a finitely generated semi-regular ideal of $R$ such that $Im=0$ for some $m\in M$. Then $I[x]$ is a regular ideal of $R[x]$ by \cite[Exercise 6.5]{fk16}. So $I\otimes_R\T(R[x])=\T(R[x])$. Hence, we have $0=Im= I\otimes_R\T(R[x])m=\T(R[x])m$ since $M$ is a $\T(R[x])$-module. So $m=0$. It follows that $M$ is $\Q$-torsion-free.

Let $I$ be a finitely generated semi-regular ideal of $R$ such that $In\subseteq M$ for some $n\in \E_R(M)$ where $\E_R(M)$ is the injective envelope of  $M$ as an $R$-module. Since $\T(R[x])$ is a flat $R$-module, we have $\E_R(M)\subseteq \E_{\T(R[x])}(M)$. Since $I\otimes_R\T(R[x])=\T(R[x])$, we have $n\in \T(R[x]) n= I\otimes_R\T(R[x])n\subseteq M\otimes_R\T(R[x])=M$. Consequently, $M$ is a Lucas $R$-module.
\end{proof}

In 2014,  Cahen et al. \cite[Problem 1]{CFFG14} posed the following open question:
\begin{center}
{\bf Problem 1b:} Let $R$ be a total ring of quotients (i.e. $R=\T(R)$). Is $\fPD(R)=0$?
\end{center}
Very recently, Wang et al. \cite{wzcc20} obtained a total ring of quotients $R$ but not a \DQ\ ring, and thus $\fPD(R)>0$ getting a negative answer to Problem 1b. Furthermore, The author et al. \cite[Example 3.10]{z-fpd} constructed an example by idealization to show that there exists a  total ring of quotients $R$ satisfying $\fPD(R)=n$ for any $n\in \mathbb{N}$. However, the next result shows that the the small finitistic dimensions of total quotient rings of all polynomial rings are equal to $0$.

\begin{theorem}\label{naga-fpd}
Let $R$ be a  ring. Then $\T(R[x])$ is a $\DQ$-ring. Consequently, for any ring $R$,  $$\fPD(\T(R[x]))=0.$$
\end{theorem}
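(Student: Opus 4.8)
The plan is to establish the sharper and more transparent statement that the only finitely generated semi-regular ideal of $S:=\T(R[x])$ is $S$ itself, that is, $\Q(S)=\{S\}$, where $\Q(S)$ denotes the set of finitely generated semi-regular ideals of the ring $S$. Granting this, $S$ is a $\DQ$-ring for essentially trivial reasons: for any $S$-module $M$ one has $\tor_{\Q(S)}(M)=\{x\in M\mid Sx=0\}=0$, so $M$ is $\Q(S)$-torsion-free, and its Lucas envelope over $S$ is $M_q=\{x\in\E_S(M)\mid Sx\subseteq M\}=M$. Hence every $S$-module is a Lucas $S$-module, i.e. $S$ is a $\DQ$-ring. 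The asserted equality $\fPD(\T(R[x]))=0$ is then immediate from the quoted equivalence \cite[Proposition 2.2]{fkxs20} between $\DQ$-rings and rings of small finitistic dimension $0$.

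To prove $\Q(S)=\{S\}$, let $J$ be a finitely generated semi-regular ideal of $S$. Since $J$ is finitely generated, it is semi-regular precisely when it is dense (a finitely generated dense subideal forces $J$ itself to satisfy $(0:_S J)=0$). First I would clear denominators: writing each generator of $J$ as $g_i/h_i$ with $g_i,h_i\in R[x]$ and $h_i$ a non-zero-divisor of $R[x]$, each $h_i$ becomes a unit of the total quotient ring $S$, so $J=(g_1,\dots,g_n)S$ with all $g_i\in R[x]$. Put $I:=(g_1,\dots,g_n)$, an ideal of $R[x]$. Using the canonical embedding $R[x]\hookrightarrow S$ (valid because a ring injects into its total quotient ring), any nonzero $p\in R[x]$ with $pI=0$ would yield a nonzero element $p/1\in S$ annihilating $J$; hence density of $J$ in $S$ forces $(0:_{R[x]}I)=0$, i.e. $I$ is dense in $R[x]$.

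The heart of the argument is then to exhibit a regular element of $R[x]$ inside $I$. From $(0:_{R[x]}I)=0$ I would first deduce that the content ideal $c(I):=\sum_i c(g_i)$ is dense in $R$: any $a\in R$ with $a\,c(I)=0$ kills every coefficient of every $g_i$, so $ag_i=0$ for all $i$ and thus $a\in(0:_{R[x]}I)=0$. Next, choosing exponents $0=N_1<N_2<\cdots<N_n$ large enough that the supports of the polynomials $g_ix^{N_i}$ are pairwise disjoint, the polynomial $f:=\sum_{i=1}^n g_ix^{N_i}\in I$ satisfies $c(f)=\sum_i c(g_i)=c(I)$, a dense (equivalently, semi-regular) ideal of $R$. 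By McCoy's theorem in the form recorded in \cite[Exercise 6.5]{fk16} (a polynomial is regular in $R[x]$ exactly when its content is semi-regular in $R$), $f$ is a regular element of $R[x]$, hence a unit of $S=\T(R[x])$. Since $f\in I\subseteq J$, we get $J=S$, completing the proof that $\Q(S)=\{S\}$.

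The only genuinely nontrivial step is the last one, namely producing a single regular polynomial inside the finitely generated dense ideal $I$ (this is in effect property (A) for polynomial rings); everything else—clearing denominators, transporting density along $R[x]\hookrightarrow S$, and extracting the $\DQ$-property from $\Q(S)=\{S\}$—is routine. I would therefore isolate the content/McCoy computation as the key point and present the remaining reductions as brief verifications.
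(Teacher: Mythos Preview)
Your proposal is correct and follows essentially the same approach as the paper: both arguments show that the only finitely generated semi-regular ideal of $S=\T(R[x])$ is $S$ itself by clearing denominators to obtain generators $g_1,\dots,g_n\in R[x]$, proving the content $\sum_i c(g_i)$ is dense in $R$, and then producing a single polynomial in the ideal with that content (hence regular by McCoy/\cite[Exercise 6.5]{fk16}, hence a unit of $S$). Your write-up is in fact a bit cleaner than the paper's---you explicitly construct the regular element as $f=\sum_i g_i x^{N_i}$ with disjoint supports, whereas the paper simply asserts the existence of such an $f$, and your density transfer from $S$ back to $R[x]$ avoids an unnecessary detour through auxiliary regular multipliers present in the paper's version---but the underlying strategy is identical.
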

\begin{proof} Let $A=\langle f_1,\dots,f_n\rangle_{\sum(\tau_q)}$ be a finitely generated semi-regular ideal of $\T(R[x])$ with $f_1,\dots,f_n\in R[x]$. Set $B=\langle  f_1,\dots,f_n\rangle R[x]$ and $J=c(B)$. 
We claim that the finitely generated ideal $J$ of $R$ is semi-regular. Indeed, let $r$ be an element in $R$ such that $Jr=0$. Then  $rf_i=0$ in $R[x]$ for each $i=1,\dots,n$. Thus  $r\frac{f_i}{1}=0$ in $\T(R[x])$ for each $i=1,\dots,n$. So $rg_if_i=0$ for some regular polynomial $g_i\in R[x]$ ($i=1,\dots,n$). Set $g=\prod\limits_{i=1}^ng_i$. Then $rgf_i=0$ for each $i=1,\dots,n$. Since $A=\langle f_1,\dots,f_n\rangle_{\sum(\tau_q)}$ is a semi-regular ideal of  $\T(R[x])$, we have $rgh=0$ in $R[x]$ for some regular polynomial $h\in R[x]$. Hence $r=0$. It follows that $J$  is semi-regular. Now, let $f\in \langle  f_1,\dots,f_n\rangle R[x]$ such that $c(f)=J$. Then $f\in B\cap \sum(\tau_q)$. So  $A=\T(R[x])$, and thus $\T(R[x])$ is a $\DQ$-ring. By \cite[Proposition 2.2]{fkxs20}, the small finitistic dimension of $\T(R[x])$ is $0$.
\end{proof}

\begin{proposition}\label{q-tor-naga} Let $M$ be a $R$-module.
Then $M$ is a $\Q$-torsion module if and only if $M\otimes_R\T(R[x])=0$.
\end{proposition}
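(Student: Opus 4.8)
The plan is to prove both implications directly, relying on two facts already in hand. The first is that localization commutes with the tensor product, so that there is a natural isomorphism $M\otimes_R\T(R[x])\cong M[x]_{\sum(\tau_q)}$, where $M[x]:=M\otimes_R R[x]$ and the subscript denotes localization at the multiplicative set $\sum(\tau_q)$ of regular polynomials; note $M[x]=\bigoplus_{k\geq 0}Mx^k$ as an $R$-module. The second is the observation used in the proofs of Proposition~\ref{naga-lucas} and Theorem~\ref{naga-fpd}, namely that $I\otimes_R\T(R[x])=\T(R[x])$, equivalently $I\T(R[x])=\T(R[x])$, for every $I\in\Q$.

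For the forward implication, I would assume $M$ is $\Q$-torsion and show that every simple tensor $m\otimes s$ in $M\otimes_R\T(R[x])$ vanishes. Given $m\in M$, choose $I\in\Q$ with $Im=0$; then the cyclic submodule $Rm$ is a quotient of $R/I$, so $Rm\otimes_R\T(R[x])$ is a quotient of $(R/I)\otimes_R\T(R[x])\cong\T(R[x])/I\T(R[x])=0$ by right-exactness of the tensor product together with $I\T(R[x])=\T(R[x])$. Hence $m\otimes s$, being the image of an element of $Rm\otimes_R\T(R[x])=0$ under the map induced by the inclusion $Rm\hookrightarrow M$, is zero. Since such tensors generate the whole module, $M\otimes_R\T(R[x])=0$.

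For the converse, I would assume $M\otimes_R\T(R[x])=0$ and use the identification $M\otimes_R\T(R[x])\cong M[x]_{\sum(\tau_q)}$. Each $m\in M$, regarded as a constant polynomial in $M[x]$, then satisfies $m/1=0$ in the localization, so there exists a regular polynomial $g\in\sum(\tau_q)$ with $gm=0$ in $M[x]$. Writing $g=\sum_j a_jx^j$ and comparing coefficients in $M[x]=\bigoplus_k Mx^k$ yields $a_jm=0$ for all $j$, that is, $c(g)m=0$. Since $g\in\sum(\tau_q)$ means precisely $c(g)\in\Q$, we conclude $m\in\tor_{\Q}(M)$; as $m$ was arbitrary, $M$ is $\Q$-torsion.

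The argument is essentially formal, and I expect the only step with genuine content to be the coefficient-comparison in the converse: it is what converts the vanishing of $m/1$ after localizing at $\sum(\tau_q)$ into annihilation of $m$ by the finitely generated semi-regular content ideal $c(g)\in\Q$. The identification of $M\otimes_R\T(R[x])$ with the localization $M[x]_{\sum(\tau_q)}$, and the reuse of $I\T(R[x])=\T(R[x])$ in the forward direction, are the routine supporting facts and should present no difficulty.
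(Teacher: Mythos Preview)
Your proof is correct. The forward direction is essentially the paper's argument repackaged: where the paper picks generators $a_{i,0},\dots,a_{i,n_i}$ of $I_i\in\Q$, forms the regular polynomial $g'_i=\sum a_{i,j}x^j$, and writes $m_i\otimes\frac{f_i}{g_i}=g'_im_i\otimes\frac{f_i}{g'_ig_i}=0$, you simply observe that $Rm$ is a quotient of $R/I$ and that $(R/I)\otimes_R\T(R[x])=0$; these are the same idea. Your converse, however, takes a cleaner route than the paper's. The paper invokes the tensor-vanishing criterion (\cite[Chapter I, Lemma 6.1]{FS01}) to produce $r_1,\dots,r_n\in R$ with $mr_j=0$ and $1=\sum r_j\frac{f_j}{g_j}$ in $\T(R[x])$, and then must argue separately that $\langle r_1,\dots,r_n\rangle\in\Q$ by clearing denominators. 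You bypass this by using the localization description $M\otimes_R\T(R[x])\cong M[x]_{\sum(\tau_q)}$ directly: $m/1=0$ gives a regular $g$ with $gm=0$ in $M[x]$, and coefficient comparison immediately yields $c(g)m=0$ with $c(g)\in\Q$. Your version is shorter and avoids the external lemma; the paper's version has the minor advantage of staying entirely within the tensor-product formalism without invoking the $M[x]$ identification.
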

\begin{proof} Suppose $M$ is a $\Q$-torsion module. Let $\sum\limits_{i=1}^nm_i\otimes\frac{f_i}{g_i}$ be an element in $M\otimes_R\T(R[x])$ which  is isomorphic to $ M[x]\otimes_{R[x]}\T(R[x])$ (we identify these two modules). For each $m_i\in M$, there exists an $I_i=\langle a_{i,0},\dots,a_{i,n_i}\rangle \in \Q$ such that $I_im_i=0$. Set $g'_i=a_{i,0}+\cdots+a_{i,0}x^{n_i}$, then $g'_im_i=0$ in $M[x]$.  So
 $$\sum\limits_{i=1}^nm_i\otimes\frac{f_i}{g_i}=\sum\limits_{i=1}^ng'_im_i\otimes\frac{f_i}{g'_ig_i} =0.$$ Hence $M\otimes_R\T(R[x])=0$.

On the other hand, let $m$ be an element in $M$. Then $m\otimes 1=0$ in $M\otimes_R\T(R[x])$. So there exists $r_{j}\in R$ and $\frac{f_j}{g_j}\in \T(R[x])$ $(j=1,\cdots,n)$  such that $1=\sum\limits_{j=1}^nr_j\frac{f_j}{g_j}\in \T(R[x])$  and $mr_j=0$ for each $j=1,\cdots,n$  by \cite[Chapter I, Lemma 6.1]{FS01}. Claim that the ideal $I:=\langle r_1,\dots,r_n\rangle$ is semi-regular. Indeed, note there exists a regular polynomial $s:=s(x)\in R[x]$ such that $\sum\limits_{j=1}^n(r_jsf_j\prod\limits_{i\not=j}g_i)=s\prod\limits_{j=1}^ng_j$. Let $J$ be the ideal generated by the coefficients of regular polynomial $s\prod\limits_{j=1}^ng_j$. Then $J\in \Q$. Note that $J\subseteq I$. So we have  $I\in \Q$. Note that $Im=0$. So  $M$ is a $\Q$-torsion module.
\end{proof}

\begin{proposition}\label{q-exacy-naga} An $R$-sequence $A\rightarrow B\rightarrow C$ is $\tau_q$-exact if and only if  $A\otimes_R\T(R[x])\rightarrow B\otimes_R\T(R[x])\rightarrow C\otimes_R\T(R[x])$ is  $\T(R[x])$-exact.
\end{proposition}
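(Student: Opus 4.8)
The plan is to translate both conditions into the vanishing of certain $\Q$-torsion modules and then invoke Proposition \ref{q-tor-naga}. Write $f\colon A\to B$, $g\colon B\to C$, and abbreviate $T=\T(R[x])$. By the description of $\tau_q$-exactness recalled in the Introduction, the sequence $A\xrightarrow{f}B\xrightarrow{g}C$ is $\tau_q$-exact if and only if the two modules $(\Im f+\Ker g)/\Im f$ and $(\Im f+\Ker g)/\Ker g$ are both $\Q$-torsion. By Proposition \ref{q-tor-naga}, an $R$-module is $\Q$-torsion precisely when it dies after tensoring with $T$; thus it suffices to compute the two tensored quotients $[(\Im f+\Ker g)/\Im f]\otimes_R T$ and $[(\Im f+\Ker g)/\Ker g]\otimes_R T$ and to show that their simultaneous vanishing matches exactness of the tensored sequence at the middle term.

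The essential input is that $T$ is a flat $R$-module, as was used in the proof of Proposition \ref{naga-lucas}. First I would record the standard flatness bookkeeping. Since $-\otimes_R T$ is exact, it sends the inclusions $\Im f\hookrightarrow B$, $\Ker g\hookrightarrow B$ and $(\Im f+\Ker g)\hookrightarrow B$ to inclusions into $B\otimes_R T$, and it yields the identifications $\Im(f\otimes 1)=(\Im f)\otimes_R T$ and $\Ker(g\otimes 1)=(\Ker g)\otimes_R T$ inside $B\otimes_R T$. Tensoring the short exact sequence $0\to \Im f\cap\Ker g\to \Im f\oplus\Ker g\to \Im f+\Ker g\to 0$ with the flat module $T$ then gives $(\Im f+\Ker g)\otimes_R T=\Im(f\otimes 1)+\Ker(g\otimes 1)$ as submodules of $B\otimes_R T$.

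Combining these identifications with right exactness of the tensor product, I obtain $[(\Im f+\Ker g)/\Im f]\otimes_R T=[\Im(f\otimes 1)+\Ker(g\otimes 1)]/\Im(f\otimes 1)$ and, symmetrically, $[(\Im f+\Ker g)/\Ker g]\otimes_R T=[\Im(f\otimes 1)+\Ker(g\otimes 1)]/\Ker(g\otimes 1)$. Hence, via Proposition \ref{q-tor-naga}, the first quotient is $\Q$-torsion iff $\Ker(g\otimes 1)\subseteq \Im(f\otimes 1)$, while the second is $\Q$-torsion iff $\Im(f\otimes 1)\subseteq \Ker(g\otimes 1)$. Both hold at once exactly when $\Im(f\otimes 1)=\Ker(g\otimes 1)$, i.e. when $A\otimes_R T\to B\otimes_R T\to C\otimes_R T$ is exact at the middle term. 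This closes the equivalence.

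I expect the only point requiring genuine care to be the flatness bookkeeping, namely that $-\otimes_R T$ commutes with the formation of images, kernels, and the submodule sum $\Im f+\Ker g$, since the remaining logical steps are immediate consequences of Proposition \ref{q-tor-naga}. No serious obstacle is anticipated beyond keeping these identifications straight.
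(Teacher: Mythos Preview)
Your proof is correct and follows essentially the same approach as the paper: both reduce to the characterization of $\tau_q$-exactness via the two quotients $(\Im f+\Ker g)/\Im f$ and $(\Im f+\Ker g)/\Ker g$ being $\Q$-torsion, then invoke Proposition \ref{q-tor-naga} together with flatness of $\T(R[x])$ over $R$. The paper's version is terser and leaves the flatness bookkeeping (commutation of $-\otimes_R T$ with images, kernels, and sums) implicit, whereas you spell it out carefully.
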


\begin{proof}
 Note that a sequence  $A\xrightarrow{f} B\xrightarrow{g} C$ is  $\tau_q$-exact if and only if $(\Im(f)+\Ker(g))/\Im(f)$ and $(\Im(f)+\Ker(g))/\Ker(g)$ are both $\Q$-torsion, if and only if $\Ker(g)\otimes_R\T(R[x])=(\Im(f)+\Ker(g))\otimes_R\T(R[x])=\Im(f)\otimes_R\T(R[x])$, if and only if $A\otimes_R\T(R[x])\rightarrow B\otimes_R\T(R[x])\rightarrow C\otimes_R\T(R[x])$ is  $\T(R[x])$-exact.
\end{proof}

\begin{corollary}\label{q-mei-naga} An $R$-sequence $f: A\rightarrow B$ is a $\tau_q$-monomorphism $($resp., $\tau_q$-epimorphism, $\tau_q$-isomorphism$)$  if and only if  $f\otimes_R\T(R[x]):  A\otimes_R\T(R[x])\rightarrow B\otimes_R\T(R[x])$ is a monomorphism $($resp., an epimorphism, an isomorphism$)$ over $\T(R[x])$.
\end{corollary}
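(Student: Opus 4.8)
The plan is to deduce all three statements from Proposition \ref{q-exacy-naga} by padding the single map $f\colon A\to B$ with a zero module on the appropriate side, so that $\tau_q$-exactness of the padded three-term sequence records precisely the injectivity or surjectivity of $f$ after localization.

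First, for the monomorphism case, I would feed the sequence $0\to A\xrightarrow{f} B$ into Proposition \ref{q-exacy-naga} as the instance with left-hand module $0$. Localizing at any $\m\in q$-$\Max(R)$, this sequence is exact at $A_\m$ exactly when $f_\m$ is injective; hence it is $\tau_q$-exact if and only if $f$ is a $\tau_q$-monomorphism. On the other hand, Proposition \ref{q-exacy-naga} asserts that the same sequence is $\tau_q$-exact if and only if $0\to A\otimes_R\T(R[x])\xrightarrow{f\otimes 1} B\otimes_R\T(R[x])$ is $\T(R[x])$-exact, that is, if and only if $f\otimes_R\T(R[x])$ is a monomorphism. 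Chaining the two equivalences yields the monomorphism case. For the epimorphism case I would run the symmetric argument on the sequence $A\xrightarrow{f} B\to 0$, now with right-hand module $0$: its localizations are exact at $B_\m$ exactly when $f_\m$ is surjective, so it is $\tau_q$-exact if and only if $f$ is a $\tau_q$-epimorphism, while by Proposition \ref{q-exacy-naga} it is $\tau_q$-exact if and only if $f\otimes_R\T(R[x])$ is an epimorphism.

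Finally, the isomorphism case follows by combining the previous two, since a map is a $\tau_q$-isomorphism if and only if it is simultaneously a $\tau_q$-monomorphism and a $\tau_q$-epimorphism (equivalently, both its kernel and cokernel are $\Q$-torsion), and likewise $f\otimes_R\T(R[x])$ is an isomorphism over $\T(R[x])$ if and only if it is both mono and epi there.

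The only point requiring care — the ``main obstacle,'' though a minor one for a corollary — is the bookkeeping needed to legitimately specialize Proposition \ref{q-exacy-naga} to the padded sequences. Concretely, I must check that its criterion, phrased through $(\Im(f)+\Ker(g))/\Im(f)$ and $(\Im(f)+\Ker(g))/\Ker(g)$ being $\Q$-torsion, collapses correctly when an outer module is $0$: relabeling $0\to A\xrightarrow{f} B$ makes the relevant image vanish so that the condition reduces exactly to ``$\Ker(f)$ is $\Q$-torsion,'' while relabeling $A\xrightarrow{f} B\to 0$ makes the relevant kernel equal to $B$ so that it reduces exactly to ``$\Cok(f)$ is $\Q$-torsion.'' Once this identification is verified, no further computation is needed.
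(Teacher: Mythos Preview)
Your proposal is correct and is precisely the intended argument: the paper presents this result as an immediate corollary of Proposition~\ref{q-exacy-naga} with no separate proof, and your padding-by-zero specialization is exactly how one extracts it. The bookkeeping you flag is routine and handled correctly.
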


\section{$\tau_q$-finitely generated modules and $\tau_q$-finitely presented modules}
Recall that an $R$-module $M$ is said to be \emph{$w$-finitely generated} if there exist $w$-exact sequence $F\rightarrow M\rightarrow 0$ with $F$ finitely generated free; and \emph{$w$-finitely presented} if there exist $w$-exact sequence exact sequence $ F_1\rightarrow F_0\rightarrow M\rightarrow 0$ such that $F_0$ and $F_1$ are finitely generated free modules. In this section, we will investigate some $\tau_q$-analogues of classical notions of finitely generated modules and finitely presented modules.

\begin{definition}{ Let $M$ be an $R$-module.

\begin{enumerate}
\item   $M$ is said to be \emph{$\tau_q$-finitely generated} provided that there exists  a $\tau_q$-exact sequence $F\rightarrow M\rightarrow 0$ with $F$ finitely generated free.

\item  $M$ is said to be \emph{$\tau_q$-finitely presented} provided that there exists a $\tau_q$-exact sequence $$ F_1\rightarrow F_0\rightarrow M\rightarrow 0$$ such that $F_0$ and $F_1$ are finitely generated free modules.
\end{enumerate}

 }
\end{definition}

\begin{remark}\label{FPFG}It is easy to verify the following assertions (One can refer to \cite[Section 6.4]{fk16} for the $w$-operation case).
\begin{enumerate}
   \item every $w$-finitely generated (resp., $w$-finitely presented) $R$-module is $\tau_q$-finitely generated (resp., $\tau_q$-finitely presented).

   \item If $R$ is a $\DQ$-ring, then every $\tau_q$-finitely generated (resp., $\tau_q$-finitely presented) module is finitely generated (resp., finitely presented). If $R$ is a $\WQ$-ring, then every $\tau_q$-finitely generated (resp., $\tau_q$-finitely presented) module is $w$-finitely generated (resp., $w$-finitely presented).

   \item An $R$-module $M$ is $\tau_q$-finitely generated if and only if there exists a finitely generated submodule $N$ of $M$ such that $M/N$ is $\Q$-torsion.

   \item An $R$-module $M$ is $\tau_q$-finitely presented if and only if there exist an exact $0\rightarrow T_1\rightarrow N\rightarrow M\rightarrow T_2\rightarrow 0$ with $N$ finitely presented and $T_1, T_2$ $\Q$-torsion.

    \item  A $\tau_q$-finitely generated module $M$ is  $\tau_q$-finitely presented if and only if any $\tau_q$-exact sequence $0\rightarrow A\rightarrow B\rightarrow M\rightarrow 0$ with $B$ $\tau_q$-finitely generated can deduce $A$ is $\tau_q$-finitely generated, if and only if there is a $\tau_q$-exact sequence $0\rightarrow A\rightarrow B\rightarrow M\rightarrow 0$ with $A$ $\tau_q$-finitely generated and $B$  $\tau_q$-finitely presented.

    \item Suppose $f:M\rightarrow N$ is a $\tau_q$-isomorphism. Then $M$ is $\tau_q$-finitely generated (resp., $\tau_q$-finitely presented) if and only if $N$ is $\tau_q$-finitely generated (resp., $\tau_q$-finitely presented).

\end{enumerate}
\end{remark}

\begin{theorem}\label{naga-fg-fp} Let $M$ be an $R$-module. Then the following statements hold.
\begin{enumerate}
   \item $M$ is $\tau_q$-finitely generated if and only if $M\otimes_R\T(R[x])$ is finitely generated over $\T(R[x])$.
   \item $M$ is $\tau_q$-finitely presented if and only if $M\otimes_R\T(R[x])$ is finitely presented over $\T(R[x])$.
\end{enumerate}
\end{theorem}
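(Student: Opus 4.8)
The plan is to prove both equivalences by transporting the relevant sequences across the flat base change $-\otimes_R\T(R[x])$, using that $\T(R[x])$ is a flat $R$-module together with the dictionary already established: a module is $\Q$-torsion exactly when it dies after tensoring with $\T(R[x])$ (Proposition \ref{q-tor-naga}), and a three-term sequence is $\tau_q$-exact exactly when its base change is $\T(R[x])$-exact (Proposition \ref{q-exacy-naga}, Corollary \ref{q-mei-naga}). I would also lean on the intrinsic description in Remark \ref{FPFG}(3), namely that $M$ is $\tau_q$-finitely generated if and only if it has a finitely generated submodule $N$ with $M/N$ being $\Q$-torsion.

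For (1), the forward direction is immediate: given a $\tau_q$-exact $F\to M\to 0$ with $F$ finitely generated free, Corollary \ref{q-mei-naga} turns $F\to M$ into an epimorphism $F\otimes_R\T(R[x])\to M\otimes_R\T(R[x])$, and since $F\otimes_R\T(R[x])$ is finitely generated free over $\T(R[x])$, the target is finitely generated. For the converse I would take a finite generating set of $M\otimes_R\T(R[x])$ and, after clearing the $\T(R[x])$-coefficients (using only that $\T(R[x])$ acts on the tensor factor), replace it by generators of the form $m_i\otimes 1$ with $m_i\in M$. Setting $N=\langle m_1,\dots,m_n\rangle\subseteq M$ and tensoring $0\to N\to M\to M/N\to 0$ with the flat module $\T(R[x])$, I find that $N\otimes_R\T(R[x])\to M\otimes_R\T(R[x])$ is surjective, so $(M/N)\otimes_R\T(R[x])=0$; Proposition \ref{q-tor-naga} then makes $M/N$ $\Q$-torsion, and Remark \ref{FPFG}(3) gives that $M$ is $\tau_q$-finitely generated.

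For (2), the forward direction again follows by base-changing a $\tau_q$-exact presentation $F_1\to F_0\to M\to 0$ via Proposition \ref{q-exacy-naga} and observing that the $F_i\otimes_R\T(R[x])$ are finitely generated free. The converse is where the real work lies. Starting from part (1), $M$ is $\tau_q$-finitely generated, so I get a finitely generated free $F_0$ and a map $\pi\colon F_0\to M$ whose image $N$ has $\Q$-torsion cokernel; writing $K=\Ker(\pi)$ and tensoring with the flat $\T(R[x])$, the vanishing of $(M/N)\otimes_R\T(R[x])$ yields an exact sequence $0\to K\otimes_R\T(R[x])\to F_0\otimes_R\T(R[x])\to M\otimes_R\T(R[x])\to 0$ of $\T(R[x])$-modules with finitely generated free middle term and finitely presented right-hand term. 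The main obstacle is to conclude that $K$ itself is $\tau_q$-finitely generated; I would handle this by invoking the standard fact (a consequence of Schanuel's lemma) that the kernel of a surjection from a finitely generated free module onto a finitely presented module is finitely generated, applied over $\T(R[x])$. This makes $K\otimes_R\T(R[x])$ finitely generated and hence, by part (1), forces $K$ to be $\tau_q$-finitely generated.

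Finally, choosing a finitely generated free $F_1$ with a $\tau_q$-epimorphism onto $K$ and composing with $K\hookrightarrow F_0$, I would check directly that $F_1\to F_0\to M\to 0$ is $\tau_q$-exact: at $M$ the map $\pi$ is a $\tau_q$-epimorphism, while at $F_0$ the image of $F_1$ sits inside $\Ker(\pi)=K$ with quotient $K/\Im(F_1\to K)$ that is $\Q$-torsion, so the defining $\Q$-torsion conditions for $\tau_q$-exactness hold. Hence $M$ is $\tau_q$-finitely presented. I expect the only delicate points to be the coefficient-clearing step in (1) and the transfer of finite presentation of the kernel in (2); everything else is formal manipulation with flatness and the $\tau_q$-dictionary.
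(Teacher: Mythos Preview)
Your proposal is correct and follows essentially the same route as the paper: both directions of (1) and (2) are handled by transporting the relevant (presentation) sequences across the flat base change $-\otimes_R\T(R[x])$ using Proposition~\ref{q-tor-naga} and Corollary~\ref{q-mei-naga}, and in the converse of (2) both arguments reduce to (1), tensor a $\tau_q$-short exact sequence with free middle term, and invoke the standard Schanuel-type fact to see the kernel is finitely generated over $\T(R[x])$ before quoting Remark~\ref{FPFG}(5). The only cosmetic difference is in the converse of (1): the paper first reduces to the $\Q$-torsion-free case and passes through $M[x]$ and contents to locate a finitely generated $R$-submodule $c(N)\subseteq M$ with full base change, whereas your denominator-clearing argument produces such a submodule directly and avoids that preliminary reduction; the two are equivalent in spirit, and your version is marginally cleaner.
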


\begin{proof} $(1)$ Suppose there exists  a $\tau_q$-exact sequence $ F\rightarrow M\rightarrow 0$ with $F$ finitely generated free. Then we have a $\T(R[x])$-exact  sequence  $F\otimes_R\T(R[x])\rightarrow M\otimes_R\T(R[x])\rightarrow 0$. So $M\otimes_R\T(R[x])$ is finitely generated over $\T(R[x])$. On the other hand, suppose $M\otimes_R\T(R[x])$ is finitely generated over $\T(R[x])$. We may assume that $M$ is $\Q$-torsion free by Lemma \ref{q-tor-naga}. Then there exists a finitely generated $R[x]$-submodule $N$ of $M[x]$ such that  $M\otimes_R\T(R[x])=N\otimes_{R[x]}\T(R[x])$. Thus the content $c(N)$ of $N$ is a finitely generated $R$-module such that $N\subseteq c(N)[x]\subseteq M[x]$. So $M\otimes_R\T(R[x])=N\otimes_{R[x]}\T(R[x])\subseteq c(N)[x]\otimes_{R[x]}\T(R[x])\subseteq M[x]\otimes_{R[x]}\T(R[x])=M\otimes_R\T(R[x])$. Hence $M\otimes_R\T(R[x])=c(N)[x]\otimes_{R[x]}\T(R[x])$ is a finitely generated $\T(R[x])$-module.

$(2)$  Suppose there exists  a $\tau_q$-exact sequence $F_1\rightarrow F_0\rightarrow M\rightarrow 0$ such that $F_0$ and $F_1$ are finitely generated free modules.   Then we have a $\T(R[x])$-exact  sequence  $F_1\otimes_R\T(R[x])\rightarrow F_0\otimes_R\T(R[x])\rightarrow M\otimes_R\T(R[x])\rightarrow 0 $. So $M\otimes_R\T(R[x])$ is finitely presented over $\T(R[x])$. On the other hand,  suppose $M\otimes_R\T(R[x])$ is finitely presented $\T(R[x])$-module, then $M$ is $\tau_q$-finitely generated. Consider the $\tau_q$-exact sequence $0\rightarrow L\rightarrow F\xrightarrow{f} M\rightarrow 0$, where $L=\Ker(f)$ and $F$ finitely generated free. Hence  $0\rightarrow L\otimes_R\T(R[x])\rightarrow F\otimes_R\T(R[x])\rightarrow M\otimes_R\T(R[x])\rightarrow 0$ is exact, and so $L\otimes_R\T(R[x])$ is finitely generated. Hence $L$ is $\tau_q$-finitely generated by (1). Consequently, $M$ is $\tau_q$-finitely presented by Remark \ref{FPFG}(5).
\end{proof}

\begin{proposition}\label{lqfg} The following statements hold.

\begin{enumerate}

   \item  The class of $\tau_q$-finitely generated module is closed under extensions.

   \item Any image of the $\tau_q$-finitely generated module is $\tau_q$-finitely generated.
\end{enumerate}
\end{proposition}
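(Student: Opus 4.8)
The plan is to reduce both statements to finite generation over the Nagata ring $\T(R[x])$ by means of the criterion established in Theorem \ref{naga-fg-fp}(1): an $R$-module $M$ is $\tau_q$-finitely generated if and only if $M\otimes_R\T(R[x])$ is finitely generated over $\T(R[x])$. The only additional inputs are that $-\otimes_R\T(R[x])$ is (right) exact and the elementary facts that, over any ring, finite generation passes to quotients and to the middle term of a short exact sequence whose ends are finitely generated.

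For (1), I would take a short exact sequence $0\to A\to B\to C\to 0$ with $A$ and $C$ both $\tau_q$-finitely generated and apply $-\otimes_R\T(R[x])$. Since $\T(R[x])$ is $R$-flat (as already used in Proposition \ref{naga-lucas}), this produces a short exact sequence
$$0\to A\otimes_R\T(R[x])\to B\otimes_R\T(R[x])\to C\otimes_R\T(R[x])\to 0$$
of $\T(R[x])$-modules. By Theorem \ref{naga-fg-fp}(1) the two outer terms are finitely generated over $\T(R[x])$, whence so is the middle term $B\otimes_R\T(R[x])$; applying Theorem \ref{naga-fg-fp}(1) in the reverse direction then shows $B$ is $\tau_q$-finitely generated. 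Flatness is convenient but not essential here: even with mere right exactness, the kernel of the surjection onto $C\otimes_R\T(R[x])$ is a quotient of $A\otimes_R\T(R[x])$, hence finitely generated, and the same conclusion follows.

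For (2), let $M$ be $\tau_q$-finitely generated and let $g\colon M\to N$ be any homomorphism, with image $M'=\im(g)$; thus $M\to M'$ is surjective. Right exactness of $-\otimes_R\T(R[x])$ gives a surjection $M\otimes_R\T(R[x])\to M'\otimes_R\T(R[x])$, and since $M\otimes_R\T(R[x])$ is finitely generated over $\T(R[x])$ by Theorem \ref{naga-fg-fp}(1), so is its quotient $M'\otimes_R\T(R[x])$. A final application of Theorem \ref{naga-fg-fp}(1) yields that $M'$ is $\tau_q$-finitely generated.

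I do not anticipate a real obstacle, as both arguments are formal once the tensor criterion is in hand; the only place to be slightly careful is tracking which of exactness versus right exactness each part needs. As an alternative that stays inside $R$, one can instead use the internal description of Remark \ref{FPFG}(3): for (1), choose finitely generated $N_A\subseteq A$ and $N_C\subseteq C$ with $\Q$-torsion quotients, lift generators of $N_C$ to elements of $B$ generating a finitely generated submodule $N_C'$, set $N=N_A+N_C'$, and verify that $B/N$ is an extension of $C/N_C$ by a quotient of $A/N_A$, hence $\Q$-torsion because the $\Q$-torsion class is closed under extensions and quotients; statement (2) then follows immediately since the image of a finitely generated submodule with $\Q$-torsion quotient again has $\Q$-torsion quotient.
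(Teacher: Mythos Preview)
Your proposal is correct. Your primary argument is a genuinely different route from the paper's: you transfer the problem to $\T(R[x])$ via Theorem~\ref{naga-fg-fp}(1) and invoke the standard facts about finite generation under quotients and extensions there, whereas the paper argues entirely inside $R$ using the internal description of Remark~\ref{FPFG}(3), constructing explicit finitely generated submodules and showing the cokernels are $\Q$-torsion (using closure of the $\Q$-torsion class under extensions and quotients). Your approach is cleaner and more conceptual once the tensor criterion is available; the paper's approach is more elementary and self-contained, not relying on Theorem~\ref{naga-fg-fp}. The alternative you sketch at the end is essentially the paper's own proof.
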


\begin{proof}

$(1)$ Let $0\rightarrow M\rightarrow N\rightarrow N/M\rightarrow 0$ be an exact sequence with $M$ and $N/M$ $\tau_q$-finitely generated. There exist $M_1=\langle m_1,\dots,m_s\rangle$ and $N'/M=\langle n_1+M,\dots,n_t+M\rangle$ be submodules of $M$ and $N/M$ respectively such that $M/M_1$ and $N/N'$ are all $\Q$-torsion. Set $N_1=\langle m_1,\dots,m_s,n_1,\dots,n_t\rangle$. And then $ N_1/M\cong N'/M$. Thus there is a commutative diagram $$\xymatrix@R=20pt@C=20pt{
 0 \ar[r]^{} & M_1\ar[d]_{i_1}\ar[r]^{} & N_1 \ar[d]^{i_2}\ar[r]^{} & N_1/M\ar[d]^{i_3}\ar[r]^{} &  0\\
 0 \ar[r]^{} &  M\ar[r]^{g} &N \ar[r]^{} & N/M \ar[r]^{} &  0.}$$
Since $\Q$-torsion is closed under extensions, $\coker(i_2)$ is $\Q$-torsion. Thus $N$ is $\tau_q$-finitely generated.

$(2)$ Let $0\rightarrow M\rightarrow N\rightarrow N/M\rightarrow 0$ be an exact sequence with $N$ $\tau_q$-finitely generated. Then there exist $N_1=\langle n_1,\dots,n_k\rangle$ such that $N/N_1$ is $\Q$-torsion. Thus $(N_1+M)/M$ is a finitely generated submodule of $N/M$ such that the quotient $N/(N_1+M)$ is a quotient module of $N/N_1$. Thus $N/(N_1+M)$ is $\Q$-torsion and $N/M$ is $\tau_q$-finitely generated.
\end{proof}

\begin{proposition}\label{ex-fp} Let $0\rightarrow M\rightarrow N\rightarrow L\rightarrow 0$ be an $\tau_q$-exact sequence and $N$  $\tau_q$-finitely presented, then $L$ is $\tau_q$-finitely presented if and only if  $M$ is $\tau_q$-finitely generated.
\end{proposition}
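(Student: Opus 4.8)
The plan is to push the entire problem through the base-change functor $-\otimes_R\T(R[x])$ and then appeal to the classical module-theoretic lemma over the ring $\T(R[x])$. The point is that Theorem~\ref{naga-fg-fp} converts ``$\tau_q$-finitely generated'' and ``$\tau_q$-finitely presented'' into genuine finite generation and finite presentation over $\T(R[x])$, while Proposition~\ref{q-exacy-naga} together with Corollary~\ref{q-mei-naga} converts $\tau_q$-exactness into honest exactness over $\T(R[x])$. So once the sequence is transported, the statement reduces to a standard fact.

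First I would tensor the $\tau_q$-exact sequence $0\to M\to N\to L\to 0$ with $\T(R[x])$. Since $M\to N$ is a $\tau_q$-monomorphism and $N\to L$ is a $\tau_q$-epimorphism, Corollary~\ref{q-mei-naga} makes the base-changed outer maps into a monomorphism and an epimorphism, while exactness at the middle term is preserved by Proposition~\ref{q-exacy-naga}. Hence
$$0\to M\otimes_R\T(R[x])\to N\otimes_R\T(R[x])\to L\otimes_R\T(R[x])\to 0$$
is a genuine short exact sequence of $\T(R[x])$-modules, in which $N\otimes_R\T(R[x])$ is finitely presented by Theorem~\ref{naga-fg-fp} and the hypothesis on $N$.

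Next I would invoke the classical lemma over $\T(R[x])$: in a short exact sequence $0\to M'\to N'\to L'\to 0$ with $N'$ finitely presented, $L'$ is finitely presented if and only if $M'$ is finitely generated. For the ``if'' part one takes a finite presentation $F_1\to F_0\to N'\to 0$ with $F_0,F_1$ finitely generated free, notes that the induced surjection $F_0\to L'$ has kernel fitting in $0\to \im(F_1\to F_0)\to \ker(F_0\to L')\to M'\to 0$, and concludes that this kernel is finitely generated (an extension of the finitely generated $M'$ by the finitely generated $\im(F_1\to F_0)$), whence $L'$ is finitely presented. For the ``only if'' part one chooses a finitely generated free $F$ surjecting onto $N'$, so $F\to L'$ is a surjection whose kernel is finitely generated because $L'$ is finitely presented; then $M'$ is a quotient of that kernel, hence finitely generated. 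Transporting this back through Theorem~\ref{naga-fg-fp} gives precisely the assertion that $L$ is $\tau_q$-finitely presented if and only if $M$ is $\tau_q$-finitely generated.

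Most of the labor is already absorbed into the transfer results, so I do not expect a serious obstacle; the only delicate point is the very first step, namely checking that the base-changed sequence is short exact even though the original is merely $\tau_q$-exact, and this is exactly what Corollary~\ref{q-mei-naga} and Proposition~\ref{q-exacy-naga} guarantee. If one instead preferred to argue intrinsically, using the characterizations in Remark~\ref{FPFG} (realizing $N$ by a finitely presented module up to $\Q$-torsion, lifting generators of $M$, and assembling a finite presentation of $L$ modulo $\Q$-torsion), then the harder direction would be the ``if'' part, where the genuine difficulty is keeping track of the $\Q$-torsion kernels and cokernels introduced at each splicing step.
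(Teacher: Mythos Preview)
Your argument is correct. The base-change to $\T(R[x])$ via Proposition~\ref{q-exacy-naga} and Corollary~\ref{q-mei-naga} does produce a genuine short exact sequence, Theorem~\ref{naga-fg-fp} translates the finiteness conditions as you say, and your sketch of the classical lemma (Schanuel-type for one direction, extension of finitely generated kernels for the other) is accurate.

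The paper, however, proceeds differently: it simply invokes Remark~\ref{FPFG}(5) directly. That remark already packages the two implications intrinsically---$L$ $\tau_q$-finitely presented forces $M$ $\tau_q$-finitely generated by the first characterization there, and conversely $M$ $\tau_q$-finitely generated together with $N$ $\tau_q$-finitely presented witnesses the second characterization (once one notes $L$ is $\tau_q$-finitely generated as a $\tau_q$-quotient of $N$). So the paper's proof is a one-line citation, whereas yours routes through $\T(R[x])$ and reproves the classical Schanuel-type statement. Your approach has the virtue of making the ``$\tau_q$-properties are genuine properties over $\T(R[x])$'' philosophy do the work explicitly, and it avoids unpacking the $\Q$-torsion bookkeeping you allude to at the end; the paper's approach is shorter but hides that bookkeeping inside Remark~\ref{FPFG}(5). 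Note also that Theorem~\ref{naga-fg-fp}(2) itself appeals to Remark~\ref{FPFG}(5), so your route is not truly independent of it---it is a repackaging rather than an alternative foundation.
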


\begin{proof} The equivalence can easily be deduced by Remark \ref{FPFG}(5), so we omit it.
 \end{proof}

\begin{corollary}\label{cap-fp} Suppose $N_1$ and $N_2$ $\tau_q$-finitely presented submodule of an $R$-module $M$, then $N_1+N_2$ is $\tau_q$-finitely presented if and only if  $N_1\cap N_2$ is $\tau_q$-finitely generated.
\end{corollary}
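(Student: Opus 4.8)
The plan is to reduce the corollary to Proposition~\ref{ex-fp} by exhibiting a short exact sequence that relates $N_1+N_2$ and $N_1\cap N_2$. The standard device is the Mayer--Vietoris-type sequence
$$0\rightarrow N_1\cap N_2\xrightarrow{\ \alpha\ } N_1\oplus N_2\xrightarrow{\ \beta\ } N_1+N_2\rightarrow 0,$$
where $\alpha(x)=(x,-x)$ (using the inclusions into each summand) and $\beta(x,y)=x+y$ (using the inclusions of $N_1$ and $N_2$ into $M$). This is an honest exact sequence of $R$-modules, so in particular it is $\tau_q$-exact. The idea is to apply Proposition~\ref{ex-fp} to this sequence with the middle term $N_1\oplus N_2$, the sub $N_1\cap N_2$, and the quotient $N_1+N_2$.

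To invoke Proposition~\ref{ex-fp}, I first need the middle term $N_1\oplus N_2$ to be $\tau_q$-finitely presented. This should follow from the hypothesis that $N_1$ and $N_2$ are each $\tau_q$-finitely presented, together with the fact that the class of $\tau_q$-finitely presented modules is closed under finite direct sums. The cleanest justification is via Theorem~\ref{naga-fg-fp}(2): tensoring with $\T(R[x])$ commutes with finite direct sums, so $(N_1\oplus N_2)\otimes_R\T(R[x])\cong (N_1\otimes_R\T(R[x]))\oplus(N_2\otimes_R\T(R[x]))$ is a direct sum of two finitely presented $\T(R[x])$-modules, hence finitely presented; therefore $N_1\oplus N_2$ is $\tau_q$-finitely presented. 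With this in hand, Proposition~\ref{ex-fp} applied to the displayed sequence says exactly that the quotient $N_1+N_2$ is $\tau_q$-finitely presented if and only if the sub $N_1\cap N_2$ is $\tau_q$-finitely generated, which is the desired equivalence.

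The remaining point to verify is exactness of the displayed sequence, which is routine: $\alpha$ is injective because its first coordinate is already an inclusion, $\beta$ is surjective by definition of the sum, and $\im(\alpha)=\ker(\beta)$ since $(x,y)$ maps to $0$ under $\beta$ precisely when $x=-y\in N_1\cap N_2$. I expect no serious obstacle here; the only mild subtlety is being careful that the maps are $R$-module homomorphisms between the intended modules (all four terms are submodules of $M$ or built from such, so the inclusions are well defined). In summary, the proof is a two-line invocation: form the Mayer--Vietoris sequence, note that $N_1\oplus N_2$ is $\tau_q$-finitely presented by Theorem~\ref{naga-fg-fp}(2), and conclude by Proposition~\ref{ex-fp}.
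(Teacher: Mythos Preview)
Your proof is correct and follows essentially the same route as the paper: form the short exact sequence $0\to N_1\cap N_2\to N_1\oplus N_2\to N_1+N_2\to 0$, observe that $N_1\oplus N_2$ is $\tau_q$-finitely presented, and apply Proposition~\ref{ex-fp}. The only difference is that you justify closure of $\tau_q$-finitely presented modules under finite direct sums via Theorem~\ref{naga-fg-fp}(2), whereas the paper simply asserts it.
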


\begin{proof} Suppose $N_1$ and $N_2$ are $\tau_q$-finitely presented. Then  $ N_1\oplus N_2$ is also $\tau_q$-finitely presented. The consequence follows from  by Proposition \ref{ex-fp} and  the natural exact sequence $0\rightarrow N_1\cap N_2\rightarrow N_1\oplus N_2\rightarrow N_1+N_2\ra 0.$
\end{proof}

\section{$\tau_q$-flat modules}

Following \cite{fk15}, an $R$-module $M$ is said to be \emph{$w$-flat} provided that the induced sequence $1\otimes_R f:M\otimes_R A\rightarrow M\otimes_R B$ is a $w$-monomorphism for any $w$-monomorphism $f: A\rightarrow B$.  In this section, we study the following $\tau_q$-analogue of flat modules.

\begin{definition}{An $R$-module $M$ is said to be a \emph{$\tau_q$-flat} module provided that, for any $\tau_q$-monomorphism  $f: A\rightarrow B$,  $1_M\otimes f:M\otimes_RA\rightarrow M\otimes_R B$ is a $\tau_q$-monomorphism.}
\end{definition}

\begin{lemma}Let $T$ be a $\Q$-torsion $R$-module. Then $\Tor_1^n(T,M)$ is $\Q$-torsion for any $R$-module $M$ and any $n\geq 0$.
\end{lemma}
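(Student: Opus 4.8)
The plan is to reduce everything to the three basic closure properties of $\Q$-torsion modules recorded in the introduction: the class of $\Q$-torsion modules is closed under submodules, quotients, and arbitrary direct sums. I would first dispose of the case $n=0$. Here $\Tor_0^R(T,M)=T\otimes_R M$ is a quotient of $T\otimes_R F_0$ for a free presentation $F_0\twoheadrightarrow M$, and $T\otimes_R F_0$ is a direct sum of copies of $T$, hence $\Q$-torsion; being a quotient of a $\Q$-torsion module, $T\otimes_R M$ is $\Q$-torsion.

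For $n\geq 1$ I would run a dimension-shifting argument against a free resolution of $M$. Choosing a short exact sequence $0\to K\to F\to M\to 0$ with $F$ free and $K$ its first syzygy, the long exact sequence for $\Tor^R(T,-)$ supplies a monomorphism $\Tor_1^R(T,M)\hookrightarrow T\otimes_R K$ together with isomorphisms $\Tor_n^R(T,M)\cong\Tor_{n-1}^R(T,K)$ for $n\geq 2$. Since $T\otimes_R K$ is $\Q$-torsion by the $n=0$ case, its submodule $\Tor_1^R(T,M)$ is $\Q$-torsion as well, which settles $n=1$. Iterating the isomorphisms expresses $\Tor_n^R(T,M)$ as $\Tor_1^R(T,K_{n-1})$ for an $(n-1)$-st syzygy module $K_{n-1}$, and this is $\Q$-torsion by the case just proved; an induction on $n$ then completes the argument.

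A slicker alternative avoids the induction entirely by passing to $\T(R[x])$. Since $\T(R[x])$ is flat over $R$, the flat base-change isomorphism for $\Tor$ (moving flatness onto the first argument) gives
$$\Tor_n^R(T,M)\otimes_R\T(R[x])\cong\Tor_n^R\big(T\otimes_R\T(R[x]),\,M\big).$$
Because $T$ is $\Q$-torsion, Proposition \ref{q-tor-naga} yields $T\otimes_R\T(R[x])=0$, so the right-hand side vanishes. Applying Proposition \ref{q-tor-naga} once more then identifies the vanishing of $\Tor_n^R(T,M)\otimes_R\T(R[x])$ with the assertion that $\Tor_n^R(T,M)$ is $\Q$-torsion.

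I expect no genuine obstacle in either route; the only points demanding care are the bookkeeping in the dimension shift (the inclusion appearing at $n=1$ versus the clean isomorphisms for $n\geq 2$) and, in the alternative, invoking the flat base-change isomorphism in exactly the form that transports flatness onto the first variable so that the hypothesis $T\otimes_R\T(R[x])=0$ can be used directly.
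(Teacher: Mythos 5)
Your proof is correct, but both of your routes differ from the one the paper takes. The paper's argument is a one-line localization: for each $\m\in q$-$\Max(R)$ it uses the isomorphism $\Tor_n^R(T,M)_\m\cong \Tor_n^{R_\m}(T_\m,M_\m)$ together with the facts (from \cite[Proposition 2.7]{ZDC20}) that a $\Q$-torsion module has $T_\m=0$ and, conversely, that a module vanishing at every maximal $q$-ideal is $\Q$-torsion. Your first route replaces this by pure dimension shifting against a free resolution, relying only on the closure of the class of $\Q$-torsion modules under direct sums, quotients and submodules; this is more elementary and self-contained, since it never invokes the external characterization of $\Q$-torsion via localization, and the bookkeeping you flag (the embedding $\Tor_1^R(T,M)\hookrightarrow T\otimes_R K$ at $n=1$ versus the clean isomorphisms for $n\geq 2$) is exactly right. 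Your second route is closest in spirit to the paper: both kill the torsion by a flat change of rings, except that you use the single flat extension $\T(R[x])$ and Proposition \ref{q-tor-naga} where the paper uses the family of localizations $R_\m$; the form of flat base change you invoke, $\Tor_n^R(T,M)\otimes_R\T(R[x])\cong\Tor_n^R(T\otimes_R\T(R[x]),M)$, is valid and there is no circularity since Proposition \ref{q-tor-naga} is proved independently of this lemma. All three arguments are sound; the paper's is the shortest, yours in the first form assumes the least.
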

\begin{proof}
Let $\m$ be a maximal $q$-ideal of $R$. Then we have
$$\Tor_n^R(T,M)_\m\cong \Tor_n^{R_\m}(T_\m,M_\m).$$ Since  $T$ be a $\Q$-torsion $R$-module, $T_\m=0$ by \cite[Proposition 2.7]{ZDC20}. So $\Tor_n^R(T,M)_\m=0$. It follows from \cite[Proposition 2.7]{ZDC20} again that $\Tor_n^R(T,M)$ is $\Q$-torsion.
\end{proof}

It is well-known that an $R$-module $M$  is $w$-flat, if and only if $M_\m$ is a flat $R_\m$-module for any $\m\in w$-$\Max(R)$, if and only if $\Tor_1^R(N,M)$ is $\GV$-torsion for any $R$-module $N$, if and only if the natural homomorphism $J\otimes_R M\rightarrow JM$ is a $w$-isomorphism for any ideal $J$, if and only if $M\otimes_RR\{x\}$ is a flat $R\{x\}$-module.  (see \cite[Theorem 3.3]{fk15} and \cite[Lemma 4.1]{WQ15}). Now, we give some characterizations of $\tau_q$-flat modules.

\begin{theorem}\label{w-coh-c-c}
The following statements are equivalent for an $R$-module $M$.
\begin{enumerate}
    \item $M$ is $\tau_q$-flat;
    \item   for any monomorphism  $f: A\rightarrow B$,  $1_M\otimes f:M\otimes_RA\rightarrow M\otimes_R B$ is a $\tau_q$-monomorphism
    \item  For any $N$, $\Tor^R_1(M,N)$ is $\Q$-torsion;
      \item  For any $N$ and $n\geq 1$, $\Tor^R_n(M,N)$ is $\Q$-torsion;
      \item  For any ideal $I$, the natural homomorphism $M\otimes_RI\rightarrow MI$ is a $\tau_q$-isomorphism;
       \item  For any  finitely generated $(\tau_q$-finitely generated$)$ ideal $I$, the natural homomorphism $M\otimes_RI\rightarrow MI$ is a $\tau_q$-isomorphism;
          \item  For any finitely generated $(\tau_q$-finitely generated$)$ ideal $I$,   $\Tor^R_1(R/I,M)$ is $\Q$-torsion;
          \item $M_\m$ is a flat $R_\m$-module for any $\m\in q$-$\Max(R)$;
        \item $M\otimes_R\T(R[x])$ is a flat  $\T(R[x])$-module.
\end{enumerate}
\end{theorem}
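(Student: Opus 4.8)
The plan is to prove the chain of equivalences by establishing a few central implications and then routing everything through the localization characterization in (8), which is the natural hub since $\tau_q$-notions are defined pointwise at maximal $q$-ideals. First I would prove the equivalence of (8) with (1), (2), (3), and (4) using the standard fact, recorded just before the theorem and in \cite[Proposition 2.7]{ZDC20}, that an $R$-module $T$ is $\Q$-torsion if and only if $T_\m=0$ for every $\m\in q$-$\Max(R)$. Concretely, for any $\m\in q$-$\Max(R)$ and any $R$-modules $M,N$ one has the localization isomorphism $\Tor_n^R(M,N)_\m\cong\Tor_n^{R_\m}(M_\m,N_\m)$; hence $\Tor_n^R(M,N)$ is $\Q$-torsion for all $N$ if and only if $\Tor_n^{R_\m}(M_\m,N_\m)=0$ for all $N$ and all $\m$, which for $n=1$ is exactly flatness of $M_\m$ over $R_\m$. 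This simultaneously yields $(8)\Leftrightarrow(3)\Leftrightarrow(4)$ (the higher $\Tor$ vanishing being equivalent to the first by dimension shifting), and the preceding lemma guarantees that tensoring with $\Q$-torsion modules keeps us inside the $\Q$-torsion class, which is what lets the $\tau_q$-monomorphism condition in (1) and (2) be read off locally.

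Next I would close the loop $(1)\Leftrightarrow(2)$ and connect to (8). The implication $(1)\Rightarrow(2)$ is immediate because an ordinary monomorphism is a $\tau_q$-monomorphism. For $(2)\Rightarrow(8)$, I would localize at a fixed $\m\in q$-$\Max(R)$: given any monomorphism $A\to B$ of $R_\m$-modules, view it as a map of $R$-modules, apply (2) to get that $M\otimes_R A\to M\otimes_R B$ has $\Q$-torsion kernel, and then localize to conclude $M_\m\otimes_{R_\m}A\to M_\m\otimes_{R_\m}B$ is injective; taking $A,B$ to range over $R_\m$-modules (every $R_\m$-module is already a localization) shows $M_\m$ is flat. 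For the reverse $(8)\Rightarrow(1)$, given a $\tau_q$-monomorphism $f\colon A\to B$, its kernel is $\Q$-torsion, so $f_\m$ is injective over $R_\m$; since $M_\m$ is flat, $(1_M\otimes f)_\m$ is injective for every $\m$, and by \cite[Proposition 2.7]{ZDC20} the kernel of $1_M\otimes f$ is $\Q$-torsion, i.e. $1_M\otimes f$ is a $\tau_q$-monomorphism.

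The remaining items (5), (6), (7) I would handle by the usual reduction of flatness to ideals. The implications $(5)\Rightarrow(6)\Rightarrow(7)$ are routine (restricting to finitely generated, then relating the kernel of $M\otimes_R I\to MI$ to $\Tor_1^R(R/I,M)$ via the exact sequence $0\to I\to R\to R/I\to 0$), and $(7)\Rightarrow(8)$ follows from the local flatness criterion: $M_\m$ is flat over $R_\m$ provided $\Tor_1^{R_\m}(R_\m/I_\m,M_\m)=0$ for every finitely generated ideal, which is exactly the localization of the $\Q$-torsion condition in (7). The equivalence of the finitely generated and $\tau_q$-finitely generated versions in (6) and (7) uses Remark \ref{FPFG} together with the preceding lemma, since passing to a $\tau_q$-finitely generated ideal only changes things by a $\Q$-torsion module.

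Finally, for $(8)\Leftrightarrow(9)$ I would invoke Theorem \ref{naga-fpd} together with Proposition \ref{q-exacy-naga}. The key point is that $\{\m\otimes_R\T(R[x])\mid\m\in q\text{-}\Max(R)\}$ is precisely the set of maximal ideals of $\T(R[x])$, so flatness of $M\otimes_R\T(R[x])$ over $\T(R[x])$ is detected at exactly these localizations, and since localizing $\T(R[x])$ at $\m\otimes_R\T(R[x])$ recovers $R_\m$-data, condition (9) matches (8) on the nose; one can alternatively argue directly that $\Tor_1^{\T(R[x])}(M\otimes_R\T(R[x]),-)$ vanishes iff $\Tor_1^R(M,-)$ is $\Q$-torsion using flatness of $\T(R[x])$ over $R$ and Proposition \ref{q-tor-naga}. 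I expect the main obstacle to be the bookkeeping in $(2)\Rightarrow(8)$, namely verifying that testing flatness of $M_\m$ against arbitrary $R_\m$-module monomorphisms can be faithfully reduced to $R$-module monomorphisms without losing injectivity under localization; the rest is a careful but standard assembly of the local criterion for flatness with the $\Q$-torsion dictionary.
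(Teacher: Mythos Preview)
Your proposal is correct and follows essentially the same architecture as the paper: both route everything through the localization criterion (8) via $\Tor_n^R(M,N)_\m\cong\Tor_n^{R_\m}(M_\m,N_\m)$ and the dictionary ``$\Q$-torsion $\Leftrightarrow$ vanishes at every $\m\in q\text{-}\Max(R)$.'' Two minor points where the paper's execution is cleaner: for $(2)\Rightarrow(8)$ your bookkeeping worry disappears if you simply test with inclusions $I\hookrightarrow R$ of ideals of $R$ (every ideal of $R_\m$ is an $I_\m$), which is what the paper does; and for (9), your first approach via maximal ideals of $\T(R[x])$ is not quite ``on the nose'' since $\T(R[x])_{\m\otimes_R\T(R[x])}\cong R_\m\langle x\rangle$ rather than $R_\m$ (and Theorem~\ref{naga-fpd} is not the relevant input), whereas your alternative Tor-base-change argument is exactly the paper's route, split as $(1)\Rightarrow(9)$ using Proposition~\ref{naga-lucas} and $(9)\Rightarrow(3)$ using Proposition~\ref{q-tor-naga}.
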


\begin{proof}

$(1)\Rightarrow (2)$, $(4)\Rightarrow (3)\Rightarrow (7)$ Trivial.

$(2)\Rightarrow (8)$ Let $K=I_{\m}$ be an ideal of $R_\m$ with $I$ an ideal of $R$. By Localizing at $\m$, we consider the following exact sequence $$0\rightarrow \Ker(1_M\otimes i)_{\m}\rightarrow M_{\m}\otimes_{R_{\m}}I_{\m}\rightarrow M_{\m}\otimes_{R_{\m}} R_{\m}.$$
Since $\Ker(1_M\otimes f)$ is $\Q$-torsion, $ \Ker(1_M\otimes i)_{\m}=0$. So $M_{\m}$ is a flat $R_\m$-module.

$(8)\Rightarrow (4)$ It follows from $\Tor^R_n(M,N)_\m\cong \Tor^{R_\m}_n(M_\m,N_\m)=0$.

$(5)\Leftrightarrow (6)\Leftrightarrow (8)$  Let $\m$ be a maximal $q$-ideal of $R$.  The equivalences follow from \cite[Theorem 2.5.6]{fk16}  by Localizing at $\m$.

$(7)\Rightarrow (8)$  Let $K=I_{\m}$ be a finitely generated ideal of $R_\m$ with $I$ a finitely generated ideal of $R$. Then  $\Tor^{R_\m}_1(R_\m/K,M_\m)=\Tor^R_1(R/I,M)_\m=0$. So $M_\m$ is flat over $R_\m$ by \cite[Theorem 3.4.6]{fk16}.

$(8)\Rightarrow (1)$ Let $f:A\rightarrow B$ be a $\tau_q$-monomorphism.  Then $f_\m:A_\m\rightarrow B_\m$ is a monomorphism. By $(8)$, $f_\m\otimes_{R_{\m}} M_\m:A_\m\otimes_{R_{\m}} M_\m\rightarrow B_\m\otimes_{R_{\m}} M_\m$ is a monomorphism. Hence $1_M\otimes f:M\otimes_RA\rightarrow M\otimes_R B$ is a $\tau_q$-monomorphism.

$(1)\Rightarrow (9)$ Let $N$ be an $\T(R[x])$-module. Since $\T(R[x])$ is a flat $R$-module, we have $\Tor_1^{\T(R[x])}(M\otimes_R\T(R[x]),N)\cong \Tor_1^R(M,N)$ which is $\Q$-torsion. By Proposition \ref{naga-lucas}, $\Tor_1^{\T(R[x])}(M\otimes_R\T(R[x]),N)$ is  a Lucas module. Hence $M\otimes_R\T(R[x])$ is flat over $\T(R[x])$.

$(9)\Rightarrow (3)$ Let $N$ be an $R$-module. Note $$\T(R[x])\otimes_R\Tor^R_1(M,N)\cong \Tor^{\T(R[x])}_1(\T(R[x])\otimes_RM,\T(R[x])\otimes_RN)=0.$$ So $\Tor^R_1(M,N)$ is $\Q$-torsion by Proposition \ref{q-tor-naga}.
\end{proof}

By Theorem \ref{w-coh-c-c}, we can easily obtain the following result.
\begin{corollary} Suppose $f:M\rightarrow N$ is a $\tau_q$-isomorphism. Then $M$ is $\tau_q$-flat if and only if  $N$ is $\tau_q$-flat.
\end{corollary}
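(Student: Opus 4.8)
The plan is to prove the corollary directly from the characterizations established in Theorem~\ref{w-coh-c-c}, since a $\tau_q$-isomorphism is precisely a map whose kernel and cokernel are both $\Q$-torsion. The cleanest route is to use the equivalence $(1)\Leftrightarrow(9)$: an $R$-module is $\tau_q$-flat if and only if $M\otimes_R\T(R[x])$ is flat over $\T(R[x])$. So first I would apply the functor $-\otimes_R\T(R[x])$ to the given $\tau_q$-isomorphism $f:M\rightarrow N$. By Corollary~\ref{q-mei-naga}, the induced map $f\otimes_R\T(R[x]):M\otimes_R\T(R[x])\rightarrow N\otimes_R\T(R[x])$ is an \emph{honest} isomorphism over $\T(R[x])$. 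Since flatness is obviously invariant under isomorphism, $M\otimes_R\T(R[x])$ is flat over $\T(R[x])$ if and only if $N\otimes_R\T(R[x])$ is. Combining these two facts gives $M$ $\tau_q$-flat $\Leftrightarrow M\otimes_R\T(R[x])$ flat $\Leftrightarrow N\otimes_R\T(R[x])$ flat $\Leftrightarrow N$ $\tau_q$-flat, which is exactly the claim.

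An alternative, equally short approach is to use the local characterization $(1)\Leftrightarrow(8)$: $M$ is $\tau_q$-flat if and only if $M_\m$ is flat over $R_\m$ for every $\m\in q$-$\Max(R)$. Because $f$ is a $\tau_q$-isomorphism, its kernel and cokernel are $\Q$-torsion, hence vanish after localizing at any maximal $q$-ideal $\m$ (by \cite[Proposition 2.7]{ZDC20}); thus $f_\m:M_\m\rightarrow N_\m$ is an isomorphism of $R_\m$-modules for every such $\m$. Flatness over $R_\m$ is preserved under isomorphism, so $M_\m$ is flat for all $\m$ exactly when $N_\m$ is flat for all $\m$, and the conclusion follows. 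I would likely present the first argument, as it is the more conceptual one-line reduction to a genuine isomorphism.

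There is really no hard step here; the entire content is already packaged into Theorem~\ref{w-coh-c-c} and Corollary~\ref{q-mei-naga}. The only point requiring a moment's care is making explicit that a $\tau_q$-isomorphism becomes a true isomorphism after either localizing at a maximal $q$-ideal or tensoring with $\T(R[x])$; but both of these facts are recorded verbatim in the excerpt, so the proof amounts to chaining together equivalences. Accordingly I expect the write-up to be two or three sentences long.

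\begin{proof}
Since $f:M\rightarrow N$ is a $\tau_q$-isomorphism, Corollary~\ref{q-mei-naga} shows that $f\otimes_R\T(R[x]):M\otimes_R\T(R[x])\rightarrow N\otimes_R\T(R[x])$ is an isomorphism of $\T(R[x])$-modules. Hence $M\otimes_R\T(R[x])$ is flat over $\T(R[x])$ if and only if $N\otimes_R\T(R[x])$ is flat over $\T(R[x])$. By the equivalence $(1)\Leftrightarrow(9)$ of Theorem~\ref{w-coh-c-c}, this means $M$ is $\tau_q$-flat if and only if $N$ is $\tau_q$-flat.
\end{proof}
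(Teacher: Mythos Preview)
Your proof is correct and takes essentially the same approach as the paper: the paper simply remarks that the corollary follows from Theorem~\ref{w-coh-c-c}, and your argument spells out precisely how, via the equivalence $(1)\Leftrightarrow(9)$ together with Corollary~\ref{q-mei-naga}. Your alternative using $(1)\Leftrightarrow(8)$ is equally valid and also implicit in the paper's one-line justification.
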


\begin{proposition} A ring $R$ is a $\DQ$-ring if and only if any $\tau_q$-flat module is flat.
\end{proposition}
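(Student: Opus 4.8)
The plan is to prove both implications of the equivalence, exploiting the characterizations of $\tau_q$-flatness already established in Theorem \ref{w-coh-c-c}.

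For the forward direction, suppose $R$ is a $\DQ$-ring and let $M$ be a $\tau_q$-flat module. By Theorem \ref{w-coh-c-c}, $\Tor_1^R(M,N)$ is $\Q$-torsion for every $R$-module $N$. Since $R$ is a $\DQ$-ring, every $R$-module is a Lucas module, hence in particular $\Q$-torsion-free. A module that is simultaneously $\Q$-torsion and $\Q$-torsion-free must be zero, so $\Tor_1^R(M,N)=0$ for all $N$, which is precisely the statement that $M$ is flat. Thus every $\tau_q$-flat module is flat.

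For the converse, I would argue by contraposition: assume $R$ is not a $\DQ$-ring and produce a $\tau_q$-flat module that is not flat. Recall from the introduction (citing \cite[Proposition 2.2]{fkxs20}) that $\DQ$ rings are exactly those with small finitistic dimension $0$; so if $R$ is not a $\DQ$-ring, there exists some nonzero $\Q$-torsion module, or more usefully a module whose failure of the Lucas property can be detected. The cleanest route is to exhibit a specific $\tau_q$-flat non-flat module: since $\T(R[x])$ is always a flat $R$-module (used repeatedly above) and is a $\DQ$-ring by Theorem \ref{naga-fpd}, one can try to show $\T(R[x])$, viewed as an $R$-module, is $\tau_q$-flat but detect non-flatness when $R$ fails to be $\DQ$. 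Alternatively, and more robustly, if $R$ is not $\DQ$ then there is a nonzero Lucas-envelope obstruction, i.e. a $\Q$-torsion-free module $M$ with $M_q \neq M$; the inclusion $M \hookrightarrow M_q$ is a $\tau_q$-isomorphism, so $M_q/M$ is $\Q$-torsion and nonzero, and one leverages this to build the desired counterexample.

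The main obstacle will be the converse direction, specifically constructing a $\tau_q$-flat module that witnesses non-flatness when $R$ is not $\DQ$. The forward direction is essentially a one-line consequence of criterion (3) in Theorem \ref{w-coh-c-c} together with the definition of a $\DQ$-ring. For the converse, the key technical point is to find a module $M$ whose localizations $M_\m$ at every $\m \in q\text{-}\Max(R)$ are flat over $R_\m$ (so that $M$ is $\tau_q$-flat by condition (8)) while $M$ itself fails to be flat; the non-$\DQ$ hypothesis, equivalently $\fPD(R) \geq 1$, should supply a finitely generated module of finite but positive flat dimension, and a suitable syzygy or the module $R/I$ for an appropriate $q$-ideal $I$ is the natural candidate to examine.
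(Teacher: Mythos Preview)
Your forward direction is correct and essentially the same as the paper's (which dismisses it as ``obvious''): the key point is exactly that in a $\DQ$-ring every module is $\Q$-torsion-free, so a $\Q$-torsion $\Tor_1^R(M,N)$ must vanish.

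The converse, however, has a genuine gap. Your first candidate, $\T(R[x])$, cannot work: it is a localization of the free $R$-module $R[x]$ and is therefore \emph{always} flat over $R$, regardless of whether $R$ is a $\DQ$-ring. Your second suggestion (passing through $M_q/M$) is left as a hint with no mechanism for producing the counterexample. Your final remark mentions $R/I$ for ``an appropriate $q$-ideal $I$'', but a $q$-ideal in the paper's terminology means a $q$-closed ideal, which is the wrong object; and in any case you do not carry the argument through.

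The paper's route is direct rather than by contraposition, and hinges on the observation you are missing: for any $I\in\Q$ the module $R/I$ is $\Q$-torsion, hence automatically $\tau_q$-flat (e.g.\ by criterion~(8) of Theorem~\ref{w-coh-c-c}, since $(R/I)_\m=0$ for every $\m\in q\text{-}\Max(R)$). Assuming every $\tau_q$-flat module is flat, $R/I$ is flat, so $\Tor_1^R(R/I,R/I)\cong I/I^2=0$. Thus $I=I^2$ is a finitely generated idempotent ideal, hence $I=\langle e\rangle$ for an idempotent $e$; since $I$ is dense, $1-e\in(0:_RI)=0$, whence $I=R$. This shows $\Q=\{R\}$, i.e.\ $R$ is a $\DQ$-ring. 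Equivalently, in contrapositive form: if $R$ is not $\DQ$, pick a proper $I\in\Q$; then $R/I$ is a nonzero $\Q$-torsion (hence $\tau_q$-flat) module which the idempotent argument shows cannot be flat. This is the concrete counterexample your proposal was searching for.
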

\begin{proof} Obviously, if $R$ is a $\DQ$-ring, then  any $\tau_q$-flat module is flat. On the other hand, let $I$ be a finitely generated semi-regular ideal of $R$. Then $R/I$ is $\Q$-torsion, and hence $\tau_q$-flat. So  $R/I$ is a flat module. It follows that $\Tor_1^R(R/I,R/I)\cong I/I^2=0$ (see \cite[Exercise 3.20]{fk16}). Then $I$ is finitely generated idempotent ideal, and hence $I=\langle e\rangle$ for some idempotent element $e\in I$ by \cite[I, Proposition 1.10]{FS01}. Since  $I$ is semi-regular ideal,  $1-e\in (0:_RI)=0$. So $e=1$ and $I=R$.
\end{proof}

\begin{proposition}  A ring $R$ is a $\WQ$-ring if and only if any $\tau_q$-flat module is $w$-flat.
\end{proposition}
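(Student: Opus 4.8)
The plan is to prove the equivalence by relating the two torsion theories through their defining ideals. Recall that a $\WQ$-ring is characterized by the property that every ideal in $\Q$ is a $\GV$-ideal, which is exactly the statement that the $w$- and $q$-operations coincide. Since $w$-flatness is governed by $\GV$-torsion and $\Q$-Max$(R) = w$-Max$(R)$ in this case, the forward direction should be essentially a matter of checking that the relevant localizations and torsion notions agree. Concretely, for the direction assuming $R$ is a $\WQ$-ring, I would take a $\tau_q$-flat module $M$ and use the characterization from Theorem \ref{w-coh-c-c}, namely that $M_\m$ is flat over $R_\m$ for every $\m \in q$-$\Max(R)$. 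Because the two operations coincide, $q$-$\Max(R) = w$-$\Max(R)$, so $M_\m$ is flat over $R_\m$ for every $\m \in w$-$\Max(R)$, which is precisely the localization characterization of $w$-flatness; hence $M$ is $w$-flat.

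For the converse, I plan to argue contrapositively: if $R$ is not a $\WQ$-ring, I must exhibit a $\tau_q$-flat module that fails to be $w$-flat. The natural candidate is to take some $I \in \Q$ that is not a $\GV$-ideal and use the quotient $R/I$ (or a suitable related module). Since $I$ is semi-regular, $R/I$ is $\Q$-torsion, so $R/I$ is $\tau_q$-flat by the same reasoning as in the preceding proposition (its localization at any maximal $q$-ideal vanishes, hence is trivially flat, invoking Theorem \ref{w-coh-c-c}(8)). The point is then that $R/I$ should \emph{fail} to be $w$-flat precisely because $I$ is not $\GV$: a $\Q$-torsion module that is not $\GV$-torsion need not localize to zero at the maximal $w$-ideals, so one of the $w$-local flatness conditions breaks. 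I would make this precise by finding a maximal $w$-ideal $\m$ at which $(R/I)_\m \neq 0$ is not flat, using that $I$ being non-$\GV$ means $I$ survives $w$-localization somewhere.

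The step I expect to be the main obstacle is the converse direction, specifically pinning down exactly why $R/I$ fails $w$-flatness when $I \in \Q \setminus \GV(R)$. The clean statement would parallel the previous proposition's argument: if $R/I$ were $w$-flat, then $\Tor_1^R(R/I, R/I) \cong I/I^2$ would be $\GV$-torsion, and one would try to derive that $I$ must then be a $\GV$-ideal, contradicting the choice of $I$. The delicate point is that $\GV$-torsion of $I/I^2$ does not immediately force $I$ idempotent (unlike the $\DQ$ case, where $\Q$-torsion of $I/I^2$ combined with $I$ being a finitely generated semi-regular ideal forced $I = R$). I would therefore need to carefully use the structure of $\GV$-ideals and the hypothesis that $I$ is semi-regular but non-$\GV$ to produce the contradiction, possibly by passing to an appropriate maximal $w$-ideal and examining the local behavior there. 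An alternative route that may avoid this subtlety is to phrase everything through the localization characterizations: $w$-flatness requires flatness of $M_\m$ at all $\m \in w$-$\Max(R)$, while $\tau_q$-flatness only requires it at $\m \in q$-$\Max(R)$, and when these two sets of maximal ideals genuinely differ (which is exactly the failure of the $\WQ$ condition) one can build a module flat at one family but not the other.
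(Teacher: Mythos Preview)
Your forward direction is fine and essentially matches the paper, which simply calls it obvious; your localization argument via $q\text{-}\Max(R)=w\text{-}\Max(R)$ is a valid way to spell out that obviousness.

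For the converse, you have exactly the right setup and you have correctly located the crux. The paper argues \emph{directly} rather than contrapositively, but through the same module: for every $I\in\Q$, the quotient $R/I$ is $\Q$-torsion hence $\tau_q$-flat, so by hypothesis $w$-flat, so $\Tor_1^R(R/I,R/I)\cong I/I^2$ is $\GV$-torsion, i.e.\ $I\subseteq (I^2)_w$. What you are missing is the final step: the paper now invokes \cite[Theorem~4.2]{z-q0pvmr}, which states that $R$ is a $\WQ$-ring if and only if $I\subseteq (I^2)_w$ for every $I\in\Q$. That external characterization is precisely the ingredient that resolves your ``delicate point.''

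Your contrapositive formulation runs into trouble because you picked $I\in\Q\setminus\GV(R)$ and hoped $R/I$ is not $w$-flat; but a single such $I$ may well satisfy $I\subseteq (I^2)_w$ even though $I\notin\GV(R)$, so $R/I$ could still be $w$-flat. The correct contrapositive uses the cited theorem to choose $I\in\Q$ with $I\not\subseteq (I^2)_w$; then $I/I^2$ is not $\GV$-torsion and $R/I$ fails $w$-flatness. Either way, the argument hinges on that characterization of $\WQ$-rings, not on any further local analysis. Your proposed alternative via comparing $q\text{-}\Max(R)$ and $w\text{-}\Max(R)$ is murkier and is not what the paper does.
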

\begin{proof} Obviously, if $R$ is a $\WQ$-ring, then  any $\tau_q$-flat module is $w$-flat. On the other hand, let $I$ be a finitely generated semi-regular ideal of $R$. Then $R/I$ is $\Q$-torsion, and hence $\tau_q$-flat. So  $R/I$ is a $w$-flat module. It follows that $\Tor_1^R(R/I,R/I)\cong I/I^2$ is $\GV$-torsion. Thus $ I\subseteq (I^2)_w$ for any $I\in \Q$. By \cite[Theorem 4.2]{z-q0pvmr}, $R$ is a $\WQ$-ring.
 \end{proof}

In the rest of this section, we explore  the rings over which all $R$-modules are $\tau_q$-flat.

\begin{definition}{A ring $R$ is said to be a \emph{$\tau_q$-VN regular ring} $($short for  $\tau_q$-von Neumann regular ring$)$ provided that all $R$-modules are $\tau_q$-flat.}
\end{definition}

First we recall  some notions on Serre's conjecture ring $R\langle x\rangle$  of a ring $R$ (see \cite{S55}). Let $S_1=\{f\in R[x]\mid f\ \mbox{is a monoic polynomial in}\ R[x]\}.$  Then the Serre's conjecture ring defined to be  $R\langle x\rangle:=R[x]_{S_1}$. Note that  $R\langle x\rangle$ is a faithful flat $R$-module (see \cite[Lemma 3.5]{fk15}).

\begin{lemma}\label{naga-vn}{Suppose $R$ is a von Neumann regular ring. Then $R\langle x\rangle$ is also a von Neumann regular ring.}
\end{lemma}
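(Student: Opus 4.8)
The plan is to show that $R\langle x\rangle$ is von Neumann regular by reducing to a local criterion. Recall that a commutative ring is von Neumann regular if and only if it is reduced with Krull dimension $0$, equivalently if every localization at a prime (equivalently maximal) ideal is a field. So the strategy is to understand the prime/maximal ideals of $R\langle x\rangle$ and the corresponding localizations in terms of those of $R$.

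First I would recall the standard facts about the Serre conjecture ring $R\langle x\rangle = R[x]_{S_1}$, where $S_1$ is the set of monic polynomials. Since $R$ is von Neumann regular, $R$ is reduced and zero-dimensional, and every prime ideal is maximal. The key structural fact I would invoke is that the primes of $R\langle x\rangle$ are exactly those of the form $\m[x]_{S_1}$ (equivalently $\m\cdot R\langle x\rangle$) for $\m$ a maximal ideal of $R$: a monic polynomial over a field $R/\m$ does not lie in $\m[x]$ unless it is the zero image, and monic polynomials survive the passage to the residue field, so the primes of $R[x]$ that meet $S_1$ trivially and contract to $\m$ are governed by $R\langle x\rangle / \m R\langle x\rangle \cong (R/\m)[x]_{S_1}$. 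Since $R/\m$ is a field, $(R/\m)[x]_{S_1}$ is the localization of the PID $(R/\m)[x]$ at the set of monic polynomials, which is a field (it inverts every nonzero polynomial up to a unit, since every nonzero polynomial over a field is a unit times a monic one).

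The main computation I would carry out is therefore to check that for each maximal ideal $\m$ of $R$, the localization $R\langle x\rangle_{\n}$ at the corresponding maximal ideal $\n = \m R\langle x\rangle$ is a field. Because $R$ is von Neumann regular, $R_\m$ is a field, and localization commutes with the polynomial and fraction-field constructions: I expect $R\langle x\rangle_\n \cong (R_\m)\langle x\rangle / \text{(the maximal ideal)} \cong R_\m(x)$ collapses to $R_\m$, or more cleanly that $R\langle x\rangle / \m R\langle x\rangle \cong (R/\m)(x)_{S_1}$ is already a field and that $\m R\langle x\rangle$ is maximal, giving a residue field directly. Equivalently, one shows $R\langle x\rangle$ is reduced (it is a localization of $R[x]$, and $R[x]$ is reduced whenever $R$ is reduced) and zero-dimensional (every prime is both minimal and maximal, by the contraction analysis above, since the fibre over each maximal $\m$ of $R$ is the spectrum of the field $(R/\m)[x]_{S_1}$, a single point). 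Reduced plus zero-dimensional yields von Neumann regular.

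I expect the main obstacle to be the dimension/fibre analysis rather than the reducedness, which is essentially immediate. Specifically, the delicate point is verifying that inverting the monic polynomials in $S_1$ genuinely kills all the height-one primes of $R[x]$ lying over each maximal ideal of $R$, so that no prime of $R\langle x\rangle$ properly contains another. The cleanest route is to argue fibrewise: for a maximal ideal $\m\subset R$, the fibre ring is $(R/\m)[x]_{\overline{S_1}}$ where $\overline{S_1}$ is the image of the monic polynomials, and over the field $R/\m$ every nonzero element of $(R/\m)[x]$ is associate to a monic polynomial, so $\overline{S_1}$ inverts everything and the fibre collapses to a field. Once this fibre computation is in place, reducedness of $R\langle x\rangle$ together with the fact that every prime is maximal gives von Neumann regularity, completing the proof.
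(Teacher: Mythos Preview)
Your proposal is correct and follows essentially the same strategy as the paper: both arguments use the characterization ``von Neumann regular $=$ reduced $+$ Krull dimension $0$,'' establish reducedness via $\Nil(R[x])=\Nil(R)[x]$ (and its stability under localization), and then verify zero-dimensionality. The only difference is that the paper obtains $\dim R\langle x\rangle = 0$ by citing \cite[Theorem~5.5.9]{fk16}, whereas you supply a direct fibre computation---showing that for each maximal ideal $\m$ of $R$ the fibre $(R/\m)[x]_{\overline{S_1}}$ is a field because over a field every nonzero polynomial is associate to a monic one---which is a perfectly good (and more self-contained) substitute for that citation.
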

\begin{proof} Let  $R$ be a von Neumann regular ring. Then $R$ is a reduced ring with Krull dimension equal to $0$. Since $\Nil(R\langle x\rangle)=\Nil(R)\langle x\rangle$, we have $R$ is reduced if and only if $R\langle x\rangle$ is reduced. By \cite[Theorem 5.5.9]{fk16}  $R\langle x\rangle$ is also  a reduced ring with Krull dimension equal to $0$. Consequently, $R\langle x\rangle$ is also a von Neumann regular ring by \cite[the Remark in Page 5]{H88}.
\end{proof}

Following \cite[Theorem 4.4]{fk14}, a ring $R$ is von Neumann regular ring if and only if any $R$-module is $w$-flat. So von Neumann regular rings are exactly ``$w$-von Neumann regular ring''. Now, we can give the characterizations of  $\tau_q$-VN regular rings.
\begin{theorem}\label{char-tvn}
The following assertions are equivalent for a ring $R$.
\begin{enumerate}
    \item $R$ is a $\tau_q$-VN regular ring;
     \item for any finitely generated ideal $K$ of $R$, there exists an $I\in\Q$ such that $IK=K^2$;
    \item $R_{\m}$ is a von Neumann regular ring for any $\m\in q$-$\Max(R)$;
     \item   $\T(R[x])$ is a von Neumann regular ring;
        \item    $R$ is a reduced ring and $\Min(R)$ is compact.
\end{enumerate}
\end{theorem}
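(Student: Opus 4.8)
The plan is to route all five conditions through the two change-of-ring descriptions of $\tau_q$-flatness already in hand: localization at the maximal $q$-ideals (Theorem \ref{w-coh-c-c}(8)) and base change to $\T(R[x])$ (Theorem \ref{w-coh-c-c}(9)). I would first dispatch $(1)\Leftrightarrow(3)$. If (3) holds, every $R_\m$-module is flat, so for any $M$ the localization $M_\m$ is flat over $R_\m$ and $M$ is $\tau_q$-flat by Theorem \ref{w-coh-c-c}. Conversely, if every $R$-module is $\tau_q$-flat, then for each $\m\in q$-$\Max(R)$ and each $R_\m$-module $N$, viewing $N$ as an $R$-module gives $N_\m\cong N$, which is flat over $R_\m$ by Theorem \ref{w-coh-c-c}(8); hence $R_\m$ is von Neumann regular.

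Next I would close the core cycle on $(1),(3),(4)$. The implication $(4)\Rightarrow(1)$ is immediate: if $\T(R[x])$ is von Neumann regular then every $\T(R[x])$-module, in particular each $M\otimes_R\T(R[x])$, is flat, so $M$ is $\tau_q$-flat by Theorem \ref{w-coh-c-c}(9). For $(3)\Rightarrow(4)$ I would use that the maximal ideals of $\T(R[x])$ are exactly the $\m\T(R[x])$ with $\m\in q$-$\Max(R)$ (Section 2) and check that each localization $\T(R[x])_{\m\T(R[x])}$ is a field: for $a\in\m$ we have $a/1=0$ in the field $R_\m$, so $sa=0$ for some $s\in R\setminus\m$; since $\m\T(R[x])\cap R=\m$ gives $s\notin\m\T(R[x])$, the element $s$ is a unit after localizing and forces $a=0$ there, so the maximal ideal of $\T(R[x])_{\m\T(R[x])}$ vanishes. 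A commutative ring all of whose localizations at maximal ideals are fields is von Neumann regular, giving (4); the same fibrewise point is exactly the content of Lemma \ref{naga-vn} applied to the field $R_\m$. Combined with paragraph one this yields $(1)\Leftrightarrow(3)\Leftrightarrow(4)$.

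For $(4)\Leftrightarrow(5)$ I would exploit that $\T(R[x])$ is literally the total quotient ring of $R[x]$, since $\sum(\tau_q)$ is the set of non-zero-divisors of $R[x]$. By the classical criterion that the total quotient ring of a commutative ring $S$ is von Neumann regular precisely when $S$ is reduced with $\Min(S)$ compact (see \cite{H88}), (4) is equivalent to ``$R[x]$ reduced and $\Min(R[x])$ compact''. Since $R[x]$ is reduced iff $R$ is reduced and $\p\mapsto\p[x]$ is a homeomorphism $\Min(R)\xrightarrow{\sim}\Min(R[x])$, this is exactly (5).

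Finally I would fold in (2). For $(2)\Rightarrow(3)$, localize $IK=K^2$ at $\m\in q$-$\Max(R)$: a semiregular $I\in\Q$ cannot lie in the maximal non-semiregular ideal $\m$, so $I_\m=R_\m$ and $K_\m=K_\m^2$; as every finitely generated ideal of $R_\m$ is thereby idempotent, $R_\m$ is a field. For $(5)\Rightarrow(2)$, given finitely generated $K$ I would use compactness of $\Min(R)$: the closed set $V(K)\cap\Min(R)$ is compact, and on it $K_\p=0$, whence $\Ann(K)\not\subseteq\p$ because $K$ is finitely generated; covering this set by basic opens $D(b)$ with $b\in\Ann(K)$ yields finitely many $b_1,\dots,b_s\in\Ann(K)$ with $\langle b_1,\dots,b_s\rangle\not\subseteq\p$ for all such $\p$. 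Then $I:=K+\langle b_1,\dots,b_s\rangle$ is finitely generated, lies in no minimal prime, hence is semiregular, and $IK=K^2+\Ann(K)K=K^2$. I expect the \emph{main obstacle} to be the step $(3)\Leftrightarrow(4)$: correctly identifying the localizations of the Nagata ring $\T(R[x])$ at its maximal ideals and the contraction $\m\T(R[x])\cap R=\m$, together with the invocation of the classical compact-minimal-spectrum theorem, since this is the junction where the purely $q$-local data of (1)--(3) must be matched against the global total-quotient and compactness data of (4)--(5).
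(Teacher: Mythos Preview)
Your argument is correct and complete, but the logical routing differs from the paper's in several places, and one step is phrased a bit loosely.

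The paper runs the cycle $(1)\Rightarrow(2)\Rightarrow(3)\Rightarrow(4)\Rightarrow(1)$, obtaining reducedness in $(5)$ from $(2)$ via a short nilpotent calculation and the compactness of $\Min(R)$ from $(4)$ via \cite[Corollary~4.6]{H88}; $(5)\Rightarrow(4)$ is again \cite{H88}. You instead prove $(1)\Leftrightarrow(3)$ directly from Theorem~\ref{w-coh-c-c}(8), tie in $(4)$ by Theorem~\ref{w-coh-c-c}(9), and close the loop on $(2)$ with $(2)\Rightarrow(3)$ (same localization as the paper) together with a \emph{direct} proof of $(5)\Rightarrow(2)$: your compactness-of-$\Min(R)$ construction of $I=K+\langle b_1,\dots,b_s\rangle$ with $b_i\in\Ann(K)$ is not in the paper and is a nice self-contained replacement for the paper's detour $(5)\Rightarrow(4)\Rightarrow(1)\Rightarrow(2)$. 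Your $(3)\Rightarrow(4)$ is also more elementary than the paper's: rather than identifying $\T(R[x])_{\m\T(R[x])}\cong R_\m\langle x\rangle$ and invoking Lemma~\ref{naga-vn}, you use that a local von Neumann regular ring $R_\m$ is already a field, and kill the generators of $\m\T(R[x])$ by hand. Both approaches are fine; yours avoids Lemma~\ref{naga-vn} entirely, while the paper's identification would be reusable in non-zero-dimensional situations.

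One caution on your $(4)\Leftrightarrow(5)$: the blanket assertion ``$\T(S)$ is von Neumann regular precisely when $S$ is reduced with $\Min(S)$ compact'' is not correct for arbitrary commutative $S$; the implication from right to left generally requires in addition that $S$ have Property~A (every finitely generated ideal consisting of zero-divisors has nonzero annihilator). For $S=R[x]$ this is automatic by McCoy's theorem, so your application is valid, but you should either insert that remark or, as the paper does, invoke \cite[Corollary~4.6]{H88} directly as a statement about $\T(R[x])$ in terms of $R$ rather than routing through a general $S$.
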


\begin{proof} $(1)\Rightarrow (2)$  Let $K$ be a finitely generated ideal of $R$. Then $R/K$ is $\tau_q$-flat by (1). So the finitely generated $R$-module $\Tor_1^R(R/K,R/K)=K/K^2$ is $\Q$-torsion. Hence  there exists an $I'\in\Q$ such that $I'K\subseteq  K^2$. Let $I=I'+K$, then $I\in\Q$ and $IK=K^2$.

$(2)\Rightarrow (3)$  Let $\m$ be in $q$-$\Max(R)$ and $K_\m$ a finitely generated ideal of $R_\m$ with $K$ a finitely generated ideal of $R$. Then  there exists an $I\in\Q$ such that $IK=K^2$. So $I_\m K_\m=K_\m^2$. Note that $I_\m=R_\m$ by \cite[Proposition 2.7]{ZDC20}. Hence $K_\m=K_\m^2$. By
\cite[Proposition 1.10]{FS01}, $K_\m$ is generated by an idempotent.  It follows from \cite[Theorem 3.6.13]{fk16} that $R_{\m}$ is a von Neumann regular ring.


$(3)\Rightarrow (4)$ Let $\p$ be a maximal ideal of $\T(R[x])$, then $\p=\m\otimes_R\T(R[x])$ for some $\m\in q$-$\Max(R)$
 by \cite[Proposition 2.3]{J15}. Let $\sum(\tau_q)$ be the set of all polynomials with contents in $\Q$. Then it is easy to verify $\sum(\tau_q)\subseteq R[X]-\m[X]$, so we have $$\T(R[x])_\p=\T(R[x])_{\m\otimes_R\T(R[x])}\cong (R[x]_{\sum(\tau_q)})_{\m[x]_{\sum(\tau_q)}}\cong R[x]_{\m[x]} \cong R_{\m}\langle x\rangle, $$ by \cite[Lemma 3.5]{fk15}. By Lemma \ref{naga-vn}, $R_{\m}\langle x\rangle$ is also a von Neumann regular ring. Then $\T(R[x])_\p$ is a von Neumann regular ring for any maximal ideal $\p$ of $\T(R[x])$. So $\T(R[x])$ is also a von Neumann regular ring.

$(4)\Rightarrow (1)$ Let $M$ be an $R$-module. Then $M\otimes_R\T(R[x])$ is a $\T(R[x])$-module, and so is flat by $(3)$. Hence $M$ is a $\tau_q$-flat $R$-module by Theorem \ref{w-coh-c-c}.

$(2)+(4)\Rightarrow (5)$ First we claim $R$ is reduced.  On contrary, suppose there exists a nonzero element $a$ in $R$ such that $a^n=0$ for some smallest integer $n\geq 2$. Then there exists an $I\in \Q$ such that $Ia=\langle a^2\rangle$ by Theorem \ref{char-tvn}. Hence $(0:_Ra^2)=(0:_RIa)=((0:_RI):_Ra)=(0:_Ra)$. Since $a^{n-2}\in (0:_Ra^2)=(0:_Ra)$, we have $a^{n-1}=0$ which is a contradiction. So $a=0$. Then, by \cite[Corollary 4.6]{H88}, $\Min(R)$ is compact.

$(5)\Rightarrow (4)$ Follows by \cite[Corollary 4.6]{H88}.
\end{proof}

Following Theorem \ref{char-tvn}, we certainly have the following result.
\begin{corollary}\label{in-vn} Let $R$ be a von Neumann regular ring or an integral domain. Then $R$ is a  $\tau_q$-VN regular ring.
\end{corollary}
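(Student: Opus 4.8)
The plan is to deduce the statement directly from the equivalent characterizations collected in Theorem \ref{char-tvn}, checking that each of the two classes of rings satisfies one of those conditions. The cleanest unified route is condition (5): it suffices to show that such an $R$ is reduced and that $\Min(R)$ is compact. I would treat the two cases separately.

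First, suppose $R$ is a von Neumann regular ring. Then $R$ is reduced, and every prime ideal of $R$ is simultaneously minimal and maximal, so that $\Min(R)=\Spec(R)$. Since $\Spec(R)$ is compact for every commutative ring with identity, $\Min(R)$ is compact, and so condition (5) of Theorem \ref{char-tvn} holds. Even more directly, over a von Neumann regular ring every module is flat, and a flat module $M$ is automatically $\tau_q$-flat since $\Tor_1^R(M,N)=0$ is vacuously $\Q$-torsion for every $N$, so condition (3) of Theorem \ref{w-coh-c-c} applies.

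Next, suppose $R$ is an integral domain. Then $R$ is reduced, and its unique minimal prime is the zero ideal, so $\Min(R)=\{(0)\}$ is a single point and hence compact; thus condition (5) again holds. Alternatively, one can verify condition (2) directly: in a domain $(0:_RK)=0$ for every nonzero ideal $K$, so every nonzero finitely generated ideal lies in $\Q$. Hence, given a finitely generated ideal $K$, one takes $I=K\in\Q$ when $K\neq 0$ (whence $IK=K^2$) and $I=R$ when $K=0$.

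There is no genuine obstacle here, since the statement is a direct specialization of Theorem \ref{char-tvn}. The only points demanding a little care are the bookkeeping of $\Min(R)$ in each case and, for the domain, the observation that nonzero finitely generated ideals are semi-regular so that condition (2) is available; both are routine.
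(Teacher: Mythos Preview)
Your proposal is correct and takes essentially the same approach as the paper: the paper's proof consists solely of the sentence ``Following Theorem \ref{char-tvn}, we certainly have the following result,'' and you have simply spelled out which conditions of that theorem are verified in each case. Your verification via condition (5) (and the alternatives via flatness and via condition (2)) are all sound and merely make explicit what the paper leaves implicit.
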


Certainly,  $\Q$-torsion modules  and $w$-flat modules are $\tau_q$-flat. However,  $\tau_q$-flat modules need not be  $w$-flat. Indeed, let $R$ be an integral domain not a field. Then every $R$-module is $\tau_q$-flat by Corollary \ref{in-vn}. However, there exists an $R$-module $M$ which is not $w$-flat by
 \cite[Theorem 4.4]{fk14}.

\section{$\tau_q$-coherent rings}
Following \cite{fk12}, an $R$-module $M$ is said to be a \emph{$w$-coherent module}, if $M$ is  $w$-finitely presented and any $w$-finitely generated submodule of $M$ is $w$-finitely presented. And a ring  $R$ is called a \emph{$w$-coherent ring} provided that $R$ is $w$-coherent as an $R$-module.  Now, we introduce and study the $\tau_q$-analogues of coherent modules and coherent rings.

\begin{definition}{An $R$-module $M$ is said to be a \emph{$\tau_q$-coherent module} provided that every $\tau_q$-finitely generated submodule of $M$ is $\tau_q$-finitely presented.
A ring $R$ is a \emph{$\tau_q$-coherent ring} provided that $R$ is $\tau_q$-coherent as an $R$-module.}
\end{definition}

\begin{remark}\label{re-tq-coh}It is easy to verify the following assertions (One can refer to \cite[Section 6.9.2]{fk16} for the $w$-operation case).
\begin{enumerate}
   \item Suppose $f:M\rightarrow N$ is a $\tau_q$-isomorphism of $R$-modules. Then $M$ is $\tau_q$-coherent  if and only if $N$ is  $\tau_q$-coherent.
  \item All $w$-coherent rings and integral domains are $\tau_q$-coherent.

   \item If $R$ is a $\DQ$-ring, then $\tau_q$-coherent modules are exactly coherent modules.  If $R$ is a $\WQ$-ring, then $\tau_q$-coherent modules are exactly $w$-coherent modules.

   \item A ring $R$ is a  $\tau_q$-coherent ring if and only if every finitely generated ideal of $R$ is $\tau_q$-finitely presented, if and only if for any $n\geq 1$
       every finitely generated submodule of $R^n$ is $\tau_q$-finitely presented, if and only if for any $n\geq 1$
       every $\tau_q$-finitely generated submodule of $R^n$ is $\tau_q$-finitely presented, if and only if every $\tau_q$-finitely presented $R$-module is $\tau_q$-coherent.

\end{enumerate}
\end{remark}

Recently, the authors in \cite{ZDC20} introduced the notion of  $\tau_q$-Noetherian rings which are commutative rings with  all ideals  $\tau_q$-finitely generated.

\begin{proposition}\label{tau-N-c} Every $\tau_q$-Noetherian ring is $\tau_q$-coherent.
\end{proposition}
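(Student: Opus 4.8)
The plan is to reduce everything to the criterion in Remark \ref{re-tq-coh}(4): a ring $R$ is $\tau_q$-coherent if and only if every finitely generated ideal of $R$ is $\tau_q$-finitely presented. So given a $\tau_q$-Noetherian ring $R$, I want to show that an arbitrary finitely generated ideal $I$ is $\tau_q$-finitely presented. Choosing generators gives a surjection $R^n \twoheadrightarrow I$, whose kernel $K$ is the syzygy submodule of $R^n$; by Remark \ref{FPFG}(5) it suffices to prove that $K$ is $\tau_q$-finitely generated, since $R^n$ is finitely generated free and $I$ is already (finitely, hence) $\tau_q$-finitely generated.

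The heart of the argument is therefore the following claim, which I would isolate and prove first: if $R$ is $\tau_q$-Noetherian, then every submodule $K$ of a finite free module $R^n$ is $\tau_q$-finitely generated. I would prove this by induction on $n$. The base case $n=1$ is immediate, since submodules of $R$ are ideals, which are $\tau_q$-finitely generated by the $\tau_q$-Noetherian hypothesis. For the inductive step, I would use the projection $\pi\colon R^n\to R$ onto the last coordinate and the short exact sequence
\[
0\rightarrow K\cap R^{n-1}\rightarrow K\rightarrow \pi(K)\rightarrow 0,
\]
where $R^{n-1}$ is identified with $\ker\pi$. Here $K\cap R^{n-1}$ is a submodule of $R^{n-1}$, hence $\tau_q$-finitely generated by the induction hypothesis, and $\pi(K)$ is an ideal of $R$, hence $\tau_q$-finitely generated. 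The key input is then Proposition \ref{lqfg}(1): since the class of $\tau_q$-finitely generated modules is closed under extensions, $K$ itself is $\tau_q$-finitely generated.

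With the claim in hand, the conclusion is routine. Applying it to the kernel $K$ of $R^n\twoheadrightarrow I$ shows that $K$ is $\tau_q$-finitely generated, so $I$ is $\tau_q$-finitely presented by Remark \ref{FPFG}(5). As $I$ was an arbitrary finitely generated ideal, Remark \ref{re-tq-coh}(4) yields that $R$ is $\tau_q$-coherent.

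I expect the main obstacle to be the inductive step of the claim rather than the final assembly: one must be careful that the middle term $K$ of the displayed sequence sits correctly as an extension and that closure under extensions is being applied to genuinely $\tau_q$-finitely generated end terms. This mirrors the classical proof that Noetherian rings are coherent, with the closure property supplied by Proposition \ref{lqfg}(1) replacing the usual closure of finitely generated modules under extensions; once that replacement is verified, no further $\tau_q$-specific subtlety arises.
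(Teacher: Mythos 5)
Your proposal is correct and follows essentially the same route as the paper: present $I$ as the image of $R^n$, show the kernel is $\tau_q$-finitely generated by induction on $n$ (the paper's terse ``we can induced by $n$'' is exactly the induction you spell out, resting on Proposition \ref{lqfg}(1) for closure under extensions), and conclude via Proposition \ref{ex-fp} / Remark \ref{FPFG}(5). Your write-up merely makes explicit the inductive step that the paper leaves implicit.
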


\begin{proof}  Let $R$ be a $\tau_q$-Noetherian ring and $I$ an $n$-generated ideal of $R$, then there is an exact sequence $0\rightarrow K\rightarrow R^n\rightarrow I\rightarrow 0$. Since $R$ is $\tau_q$-Noetherian, we can induced by $n$ that $K$ is $\tau_q$-finitely generated. By Proposition \ref{ex-fp}, $I$ is $\tau_q$-finitely presented. So $R$ is also $\tau_q$-coherent.
\end{proof}

Note that the converse of Propostion \ref{tau-N-c} does not always hold.
\begin{example}
Let $R$ be a von Neumann regular which is not semi-simple. The $R$ is coherent but not Noetherian. So it is also $\tau_q$-coherent. Let $I$ be a finitely generated semiregular ideal of $R$, then $I$ is generated by a idempotent unit $e$ and thus  $I=R$. So $R$ is  a $\DQ$-ring. Thus every $\tau_q$-finitely generated ideal of $R$ is finitely generated. Consequently,  $R$ is not a $\tau_q$-Noetherian ring.
\end{example}

Note that $\tau_q$-coherent rings are not always $w$-coherent. Indeed, every  non-$w$-coherent  domain is $\tau_q$-coherent. Some non-integral domain examples  are provided by the idealization $R(+)M$, where $M$ is an $R$-module (see \cite{H88}). We recall this construction. Let  $R(+)M=R\oplus M$ as an $R$-module, and define
\begin{enumerate}
    \item ($r,m$)+($s,n$)=($r+s,m+n$).
    \item  ($r,m$)($s,n$)=($rs,sm+rn$).
\end{enumerate}
Under these definitions, $R(+)M$ becomes a commutative ring with identity.  Now we give a $\tau_q$-coherent ring with zero divisors which is not  $w$-coherent.
\begin{example}\label{tau-N-c-exam}  Let $D$ be a discrete valuation domain which is not a field, $Q$ the quotient field of $D$. Then $Q$ is not finitely generated as a $D$-module. Set $R=D(+)Q$. Then $\Nil(R)=0(+)Q$ and every non-nilpotent element is regular.

First, we will  show $R$ is not $w$-coherent. Let $R(0,1)$ be the $R$-ideal generated by $(0,1)$. Then there is a natural exact sequence $0\rightarrow 0(+)Q\rightarrow R\rightarrow R(0,1)\rightarrow 0$. By \cite[Lemma 2.2]{B03},  $0(+)Q$ is not finitely generated, thus is not of $w$-finitely generated since  $0(+)Q=\Nil(R)$ is a  $w$-ideal. So $R(0,1)$ is not $w$-finitely presented. Hence $R$ is not a $w$-coherent ring.

Now we claim that $R$ is a $\tau_q$-Noetherian ring. Let $I$ be an ideal of $R$. Then, by \cite[Corollary 3.4]{DW09}, $I$ is either of the form $K(+)Q$ where $K$ is an ideal of $D$, or $0(+)B$ where $B$ is a $D$-submodule of $Q$. If $I=K(+)Q$, then $I$ is certainly finitely generated as $K$ is finitely generated. Suppose $I=0(+)B$ where $B$ is a nonzero submodule of $Q$. If $B$ is an ideal of $D$, then $I=0(+)B$ is also  finitely generated.  Otherwise, since $R$ is a valuation domain, we can assume $B$ is a $D$-module strictly containing $D$.  We will show $0(+)B/0(+)D$ is $\Q$-torsion. Let $\frac{t}{s}$ be an element in $B-D$. Then $J=\langle s\rangle(+)Q$ is a finitely generated regular ideal of $R$ satisfying  $J(0,t/s)\subseteq \langle (0,t)\rangle \subseteq 0(+)D$. Thus $0(+)B/0(+)D$ is $\Q$-torsion. So $I$ is $\tau_q$-finitely generated, and hence $R$ is a $\tau_q$-Noetherian ring. By Proposition \ref{tau-N-c}, $R$ is also  $\tau_q$-coherent.
\end{example}

We also provide a non-$\tau_q$-coherent ring by the idealization.
\begin{example} Let $k$ be a field and $V=\prod\limits_{i=1}^\infty k$ the countably infinite  product of copies of $k$. Set $R=k(+)V$.   First we claim that $R$ is not a coherent ring. Indeed, let $v$ be the element in $V$ with all components $1$. Then we have an $R$-exact sequence $0\rightarrow 0(+)V\rightarrow R\rightarrow R(0,v)\rightarrow 0$. Since $0(+)V$ is not finitely generated, the principal ideal $R(0,v)$ is not finitely presented. So $R$ is not a coherent ring. Note that  the Krull dimension of $R$ is zero, and thus $R$ is a $\DQ$-ring  by \cite[Proposition 3.14]{wzcc20} and \cite[Proposition2.2]{fkxs20}.  So $R$ is also not a $\tau_q$-coherent ring by Remark \ref{re-tq-coh}(3).
\end{example}

The rest of this section will give several characterizations of $\tau_q$-coherent rings.
\begin{theorem}\label{w-coh-c-re}
The following statements are equivalent for a ring $R$.
\begin{enumerate}
    \item $R$ is $\tau_q$-coherent;
        \item Any intersection of two finitely generated ideals of $R$ is  $\tau_q$-finitely generated and  $(0:_Rb)=\{r\in R|rb=0\}$  is $\tau_q$-finitely generated for every $b\in R$;
      \item  $(I:_R b)=\{r\in R|rb\in I\}$ is $\tau_q$-finitely generated for every $b\in R$ and finitely generated ideal $I$.
\end{enumerate}
\end{theorem}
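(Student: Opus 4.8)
The plan is to prove the theorem by a cycle of implications $(1)\Rightarrow(3)\Rightarrow(2)\Rightarrow(1)$, using the characterization of $\tau_q$-coherence from Remark \ref{re-tq-coh}(4) together with Proposition \ref{ex-fp} and Corollary \ref{cap-fp} as the main tools. The underlying philosophy is that $\tau_q$-coherence is exactly the property needed to make kernels of maps between finitely generated free modules $\tau_q$-finitely generated, and each of conditions (2) and (3) encodes a special instance of this (intersections and annihilators/colon ideals) which, conversely, is strong enough to recover the general statement.

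For $(1)\Rightarrow(3)$, I would fix $b\in R$ and a finitely generated ideal $I$, and exhibit an exact sequence relating $(I:_R b)$ to data that is controlled by coherence. Consider the map $\varphi\colon R\rightarrow R/I$ given by $r\mapsto rb+I$; its kernel is precisely $(I:_R b)$, so there is an exact sequence $0\rightarrow (I:_R b)\rightarrow R\xrightarrow{\varphi} R/I$. The image of $\varphi$ is $(I+Rb)/I$, which is a finitely generated ideal of $R$ and hence $\tau_q$-finitely presented by Remark \ref{re-tq-coh}(4) (as a finitely generated submodule of the $\tau_q$-coherent module $R/I$, or directly as a quotient of finitely generated ideals). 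Applying Proposition \ref{ex-fp} to the $\tau_q$-exact (indeed exact) sequence $0\rightarrow (I:_R b)\rightarrow R\rightarrow \im\varphi\rightarrow 0$, with $R$ being $\tau_q$-finitely presented and $\im\varphi$ being $\tau_q$-finitely presented, forces $(I:_R b)$ to be $\tau_q$-finitely generated.

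For $(3)\Rightarrow(2)$, the annihilator statement $(0:_R b)$ is simply the case $I=0$ of (3). For the intersection statement, given finitely generated ideals $N_1, N_2$, note that each is a finitely generated ideal of the coherent-candidate ring, so they are $\tau_q$-finitely presented (here I would first have to verify that principal and then finitely generated ideals are $\tau_q$-finitely presented under hypothesis (3); this follows by induction using colon ideals to handle the kernels in $0\rightarrow K\rightarrow R^n\rightarrow N_i\rightarrow 0$, applying Proposition \ref{ex-fp}). Once $N_1$ and $N_2$ are $\tau_q$-finitely presented, Corollary \ref{cap-fp} shows $N_1\cap N_2$ is $\tau_q$-finitely generated precisely when $N_1+N_2$ is $\tau_q$-finitely presented, and $N_1+N_2$ is a finitely generated ideal whose $\tau_q$-finite presentation again follows from the colon-ideal induction. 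Finally, $(2)\Rightarrow(1)$ is the classical Chase-style argument transported to the $\tau_q$-setting: using Remark \ref{re-tq-coh}(4) it suffices to show every finitely generated ideal $I=\langle a_1,\dots,a_n\rangle$ is $\tau_q$-finitely presented, which one proves by induction on $n$, writing $I=\langle a_1,\dots,a_{n-1}\rangle+\langle a_n\rangle$ and controlling the kernel of $R^n\rightarrow I$ through an intersection of the form $\langle a_1,\dots,a_{n-1}\rangle\cap\langle a_n\rangle$ together with an annihilator $(0:_R a_n)$, both of which are $\tau_q$-finitely generated by (2).

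The main obstacle I anticipate is the inductive bookkeeping in $(2)\Rightarrow(1)$, where one must identify the kernel of the presentation map $R^n\rightarrow I$ in terms of the two ingredients provided by (2). Concretely, the syzygy module fits into an exact sequence whose pieces are a lower-rank syzygy (handled by induction), an annihilator $(0:_R a_n)$, and an intersection ideal $\langle a_1,\dots,a_{n-1}\rangle\cap Ra_n$; assembling these via the extension-closure of $\tau_q$-finitely generated modules (Proposition \ref{lqfg}(1)) and repeated use of Proposition \ref{ex-fp} is where the care is needed. The intersection appears because an element of the kernel records a relation $\sum_{i<n} r_i a_i = -r_n a_n$, so the last coordinate's contribution lands in $\langle a_1,\dots,a_{n-1}\rangle\cap Ra_n$; matching this against the annihilator term to conclude $\tau_q$-finite generation of the full kernel is the technical heart of the proof.
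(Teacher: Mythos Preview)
Your proposal is correct and relies on exactly the same tools as the paper---Proposition~\ref{ex-fp}, Corollary~\ref{cap-fp}, Remark~\ref{re-tq-coh}(4), and induction on the number of generators. The paper organizes the argument as two separate equivalences $(1)\Leftrightarrow(2)$ and $(1)\Leftrightarrow(3)$ rather than a cycle, and its $(2)\Rightarrow(1)$ is more streamlined than yours: instead of the syzygy bookkeeping you flag as the ``main obstacle,'' it applies Corollary~\ref{cap-fp} directly to the exact sequence $0\rightarrow L\cap Ra\rightarrow L\oplus Ra\rightarrow L+Ra\rightarrow 0$ (with $L=\langle a_1,\dots,a_{n-1}\rangle$ and $a=a_n$), so that $\tau_q$-finite presentation of $N=L+Ra$ follows at once from the inductive $\tau_q$-finite presentation of $L\oplus Ra$ and the $\tau_q$-finite generation of $L\cap Ra$ supplied by hypothesis~(2)---no explicit description of $\Ker(R^n\rightarrow I)$ is needed. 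Your $(3)\Rightarrow(2)$ also essentially reproves $(3)\Rightarrow(1)$ en route (to make finitely generated ideals $\tau_q$-finitely presented before invoking Corollary~\ref{cap-fp} for the intersection), which makes the cycle slightly redundant but not incorrect.
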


\begin{proof}

$(1)\Rightarrow (2)$ Let $N_1$ and $N_2$ be finitely generated ideals of $R$. By (1), $N_1$ and $N_2$ are $\tau_q$-finitely presented. Thus $N_1\oplus N_2$ is $\tau_q$-finitely presented.  Since $N_1+N_2$ is finitely generated ideals, $N_1\cap N_2$ is  $\tau_q$-finitely generated by Corollary \ref{cap-fp}. Consider the exact sequence $0\rightarrow (0:_Rb)\rightarrow R\rightarrow Rb\rightarrow 0$. Since $Rb$ is $\tau_q$-finitely presented, $(0:_Rb)$ is  $\tau_q$-finitely generated by Proposition \ref{ex-fp}.

$(2)\Rightarrow (1)$  Let $N$ be a finitely generated ideal generated by $n$ elements. We claim that $N$ is $\tau_q$-finitely presented. If $n=1$,  we can conclude  by the exact sequence  $0\rightarrow (0:_Rb)\rightarrow R\rightarrow Rb\rightarrow 0$. Suppose the claim holds for $n=k$. Now suppose $n=k+1$, then $N=L+Ra$ where $L$ is generated by $k$ element. Consider the exact sequence $0\rightarrow L\cap Ra\rightarrow L\oplus Ra\rightarrow L+Ra\rightarrow 0$, we obtain $ L\oplus Ra$ is $\tau_q$-finitely presented by induction. By (2), $L\cap Ra$ is $\tau_q$-finitely generated, thus $N=L+Ra$ is $\tau_q$-finitely presented by Corollary \ref{cap-fp}.

$(1)\Rightarrow (3)$ Let $I$ be a finitely generated ideal and $b$ an element in $R$. Consider the exact sequence commutative diagram
 $$\xymatrix@R=20pt@C=15pt{
 0 \ar[r]^{}  & (I:_R b)\ar[d]_{}\ar[r]^{} &R\ar[r]^{}\ar[d]_{} & (Rb+I)/I\ar[d]^{}\ar[r]^{} &  0\\
   0 \ar[r]^{} &I \ar[r]^{} &Rb+I \ar[r]^{} & (Rb+I)/I\ar[r]^{} &  0.}$$
Since $R$ is $\tau_q$-coherent and $I$ is finitely generated, we have $Rb+I$ is $\tau_q$-finitely presented. By Proposition \ref{ex-fp}, $(Rb+I)/I$ is $\tau_q$-finitely presented, thus $(I:_R b)$ is $\tau_q$-finitely generated.

$(3)\Rightarrow (1)$   Let $N$ be a finitely generated ideal generated by $n$ elements. We will show that $N$ is $\tau_q$-finitely presented by induction. If $n=1$,  we can conclude  by the exact sequence  $0\rightarrow (0:_Rb)\rightarrow R\rightarrow Rb\rightarrow 0$. Suppose the claim holds for $n=k$. Now suppose $n=k+1$, then $N=L+Ra$ where $L$ is generated by $k$ element. Consider the exact sequence $0 \ra (L:_R a)\ra R\ra (Ra+L)/L\ra 0$. By Proposition \ref{ex-fp}, $(Ra+L)/L= N/L$ is $\tau_q$-finitely presented by induction. Then  $N$ is $\tau_q$-finitely presented follows from the exact sequence $0 \rightarrow L \rightarrow N \rightarrow N/L\rightarrow 0$.
\end{proof}

We say that  an $R$-module $M$ is \emph{$\tau_q$-Mittag-Leffler} with respect to flat modules if the canonical map
\begin{center}
$\phi_{M}: M\bigotimes_R\prodi F_i\rightarrow \prodi  (M\bigotimes_R F_i)$
\end{center}
is a $\tau_q$-monomorphism for any family $\{F_i\}_{i\in I}$ of flat modules. Similar with \cite[Theorem 2.9]{zxl20}, one can obtain the result (we omit the proof since it is similar with that of  \cite[Theorem 2.9]{zxl20}).

\begin{lemma}\label{w-iso}
The class of $\tau_q$-Mittag-Leffler modules with respect to flat modules is closed under $\tau_q$-isomorphisms. Consequently, every $\tau_q$-finitely presented module is  $\tau_q$-Mittag-Leffler with respect to flat modules.
\end{lemma}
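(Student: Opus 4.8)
The plan is to prove the two assertions in order, obtaining the second from the first. \emph{Closure under $\tau_q$-isomorphisms is the heart of the matter}; once it is in hand the consequence is immediate. Indeed, a finitely presented module $N$ is classically Mittag-Leffler, so $\phi_N$ is even an \emph{isomorphism} for every family of modules, in particular a $\tau_q$-monomorphism for every family of flat modules, whence $N$ is $\tau_q$-Mittag-Leffler. On the other hand, by Remark \ref{FPFG}(4) a $\tau_q$-finitely presented module $M$ fits in an exact sequence $0\to T_1\to N\to M\to T_2\to 0$ with $N$ finitely presented and $T_1,T_2$ $\Q$-torsion, so the map $N\to M$ has $\Q$-torsion kernel $T_1$ and $\Q$-torsion cokernel $T_2$ and is therefore a $\tau_q$-isomorphism. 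Closure then transports the $\tau_q$-Mittag-Leffler property from $N$ to $M$.

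For the closure statement, let $f:M\to N$ be a $\tau_q$-isomorphism and fix a family $\{F_i\}_{i\in I}$ of flat modules. I would first record two stable facts. Since $f_\m$ is an isomorphism for every $\m\in q$-$\Max(R)$, so is $(f\otimes 1_X)_\m=f_\m\otimes 1_{X_\m}$ for any $R$-module $X$; hence $f\otimes 1_X$ is again a $\tau_q$-isomorphism. And if $T$ is $\Q$-torsion then $T\otimes_R X$ is $\Q$-torsion for every $X$, being a quotient of a direct sum of copies of $T$. I would then compare $\phi_M$ and $\phi_N$ through the square that is natural in the first variable,
\[
\begin{array}{ccc}
M\otimes_R\prod_{i\in I}F_i & \xrightarrow{\ \phi_M\ } & \prod_{i\in I}(M\otimes_R F_i)\\
\downarrow & & \downarrow\\
N\otimes_R\prod_{i\in I}F_i & \xrightarrow{\ \phi_N\ } & \prod_{i\in I}(N\otimes_R F_i),
\end{array}
\]
whose vertical maps are $f\otimes 1$ and $\prod_{i\in I}(f\otimes 1_{F_i})$, and I would split the $\tau_q$-isomorphism $f$ into its two short exact halves $0\to\Ker f\to M\to\Im f\to 0$ and $0\to\Im f\to N\to\Cok f\to 0$, in which $\Ker f$ and $\Cok f$ are $\Q$-torsion. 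Because each $F_i$ is flat, tensoring either sequence with $F_i$ and then forming the (exact) product yields a \emph{short exact} bottom row $0\to\prod_i(A\otimes F_i)\to\prod_i(B\otimes F_i)\to\prod_i(D\otimes F_i)\to 0$; this exactness is what makes the diagram chase run.

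The tractable situation is the one in which the $\Q$-torsion term can be routed to the tensor-\emph{source} side of $\phi$. For the cokernel half $0\to A\to B\xrightarrow{p}D\to 0$ with $D$ $\Q$-torsion, the source $D\otimes_R\prod_iF_i$ is $\Q$-torsion by the second fact, so for $x\in\Ker\phi_B$ the element $(p\otimes 1)(x)$ is $\Q$-torsion; multiplying by a suitable $I\in\Q$ pushes a multiple of $x$ into the image of $A\otimes_R\prod_iF_i$, where injectivity of the left bottom map (flatness of the $F_i$) together with the hypothesis that $\phi_A$ is a $\tau_q$-monomorphism forces that multiple into $\Ker\phi_A$, hence into a $\Q$-torsion module. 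Assembling the resulting annihilators, using that products of members of $\Q$ again lie in $\Q$, shows $x$ is $\Q$-torsion, so the property passes from $A$ to $B$. The same bookkeeping (now with $\Ker f\otimes_R\prod_iF_i$ in the source) propagates the property from $\Im f$ up to $M$ across the $\Q$-torsion kernel.

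The main obstacle, and the reason the chase must be organized so carefully, is that \emph{infinite products of $\Q$-torsion modules need not be $\Q$-torsion}: the right-hand vertical map $\prod_i(f\otimes 1_{F_i})$ has kernel $\prod_i(\Ker f\otimes F_i)$ and cokernel $\prod_i(\Cok f\otimes F_i)$, and these products are in general \emph{not} $\Q$-torsion, so that map is not a $\tau_q$-isomorphism and cannot simply be inverted. This blocks the one remaining routing, namely propagating the $\tau_q$-Mittag-Leffler property to the quotient by a $\Q$-torsion kernel, where the torsion is forced to sit as $\prod_i(A\otimes F_i)$ on the product side rather than on the source side. That direction is handled by the finer argument in the proof of \cite[Theorem 2.9]{zxl20}, exploiting the finite-type nature of $\Q$ and the flatness of the $F_i$; since our setting is a verbatim $\tau_q$-analogue, I would follow that proof rather than reproduce it.
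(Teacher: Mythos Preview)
Your approach and the paper's coincide: the paper omits the proof entirely, pointing to \cite[Theorem~2.9]{zxl20} and noting that the $w$-operation argument carries over verbatim to the $\tau_q$ setting; you provide more scaffolding but ultimately defer the decisive step to the same reference. Your derivation of the ``Consequently'' clause from closure via Remark~\ref{FPFG}(4) is correct, and your chases for the two ``inward'' transfers ($\Im f\Rightarrow N$ and $\Im f\Rightarrow M$) are sound.

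One minor bookkeeping slip: you call the deferred step ``the one remaining routing,'' but the full biconditional $M\Leftrightarrow N$ also requires $N\Rightarrow\Im f$ (passing to a submodule with $\Q$-torsion quotient), which you do not address. That direction is in fact easy by the same style of chase: for $x\in\Ker\phi_{\Im f}$, its image in $N\otimes_R\prod_iF_i$ lies in $\Ker\phi_N$ and is therefore $\Q$-torsion by hypothesis, while the kernel of $\Im f\otimes_R\prod_iF_i\to N\otimes_R\prod_iF_i$ is a quotient of $\Tor_1^R(\Cok f,\prod_iF_i)$, which is $\Q$-torsion since $\Cok f$ is. The genuinely delicate direction remains exactly the one you flagged (passing to a quotient by a $\Q$-torsion kernel, where the torsion sits on the product side), and since you defer it to \cite{zxl20} precisely as the paper does, your proposal is essentially complete.
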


Recall from \cite{WQ20}, an $R$-module $M$ is called $\tau_q$-divisible provided $IM=M$ for any $I\in \Q$, and $\prod$-flat if $M^{\Gamma}$ is flat for any index set $\Gamma$.
Now, we give the Chase theorem for $\tau_q$-coherent rings.

\begin{theorem}\label{w-coh-chase}
Let $R$ be a ring. Then the following assertions are equivalent:
\begin{enumerate}
    \item $R$ is $\tau_q$-coherent;
      \item  the direct product of any family of $\tau_q$-divisible flat $R$-module is flat;
      \item  any $\tau_q$-divisible flat $R$-module is $\prod$-flat;
       \item  $\T(R[x])$ is a $\prod$-flat $R$-module;
       \item any direct product of flat modules is $\tau_q$-flat;
       \item any direct product of projective modules is $\tau_q$-flat;
       \item any direct product of $R$ is $\tau_q$-flat.
\end{enumerate}
\end{theorem}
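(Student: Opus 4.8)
The plan is to run the seven conditions as two interlocking cycles anchored at the $\DQ$-ring $\T(R[x])$, powered by two engines: the $\tau_q$-Mittag--Leffler lemma (Lemma \ref{w-iso}), which manufactures $\tau_q$-flatness of products, and the classical Chase--Lenzing theory of coherent rings transported to $\T(R[x])$ via the dictionary of Theorems \ref{w-coh-c-c} and \ref{naga-fg-fp}. Concretely I would establish $(1)\Rightarrow(5)\Rightarrow(6)\Rightarrow(7)\Rightarrow(1)$ and, for the $\tau_q$-divisible cluster, $(1)\Rightarrow(2)\Rightarrow(3)\Rightarrow(4)\Rightarrow(1)$, the two cycles sharing $(1)$.

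For $(1)\Rightarrow(5)$: given flat modules $\{F_i\}$ and a finitely generated ideal $I$, $\tau_q$-coherence makes $I$ $\tau_q$-finitely presented (Remark \ref{re-tq-coh}(4)), so by Lemma \ref{w-iso} the canonical map $\phi_I\colon I\otimes_R\prod_i F_i\to\prod_i(I\otimes_R F_i)$ is a $\tau_q$-monomorphism; since each $F_i$ is flat, $I\otimes_R F_i\hookrightarrow F_i$, whence $\Tor_1^R(R/I,\prod_i F_i)=\Ker\phi_I$ is $\Q$-torsion and $\prod_i F_i$ is $\tau_q$-flat by Theorem \ref{w-coh-c-c}. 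The steps $(5)\Rightarrow(6)\Rightarrow(7)$ are immediate, since projectives are flat and $R$ is free. For $(7)\Rightarrow(1)$ I would take a finitely generated ideal $I$ with syzygy $0\to K\to R^n\to I\to 0$ and compare $-\otimes_R R^\Gamma$ with $(-)^\Gamma$: a diagram chase shows that $\coker(\phi_K\colon K\otimes_R R^\Gamma\to K^\Gamma)$ is a quotient of $\Ker(\phi_I)=\Tor_1^R(R/I,R^\Gamma)$, which is $\Q$-torsion by $(7)$. Thus $\phi_K$ is a $\tau_q$-epimorphism for every $\Gamma$; choosing $\Gamma=K$ and pulling back the diagonal $(k)_{k\in K}$ produces a finitely generated $N\subseteq K$ and a $J\in\Q$ with $JK\subseteq N$, so $K/N$ is $\Q$-torsion and $K$ is $\tau_q$-finitely generated. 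Then $I$ is $\tau_q$-finitely presented (Proposition \ref{ex-fp}), so $R$ is $\tau_q$-coherent.

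The divisible cluster is easy on one side: $(2)\Rightarrow(3)$ because $M^\Gamma$ is a product of the $\tau_q$-divisible flat module $M$, and $(3)\Rightarrow(4)$ because $\T(R[x])$ is flat over $R$ and $\tau_q$-divisible (indeed $I\T(R[x])=\T(R[x])$ for every $I\in\Q$). The clean closing step is $(4)\Rightarrow(1)$: for a finitely generated ideal $I$ the vanishing $\Tor_1^R(R/I,\T(R[x])^\Gamma)=0$ furnished by $(4)$ rewrites, using the identifications $I\otimes_R\T(R[x])^\Gamma=\bar I\otimes_{\T(R[x])}\T(R[x])^\Gamma$ with $\bar I=I\T(R[x])$, as injectivity of the Lenzing map $\bar I\otimes_{\T(R[x])}\T(R[x])^\Gamma\to\bar I^\Gamma$ for all $\Gamma$; as $\bar I$ is finitely generated over $\T(R[x])$, classical Lenzing yields that $\bar I$ is finitely presented over $\T(R[x])$, so $I$ is $\tau_q$-finitely presented by Theorem \ref{naga-fg-fp} and $R$ is $\tau_q$-coherent.

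The main obstacle is $(1)\Rightarrow(2)$: the Mittag--Leffler argument above only shows that $\prod_i F_i$ is $\tau_q$-flat, whereas $(2)$ demands genuine flatness, and here the $\tau_q$-divisibility of the $F_i$ must be used to upgrade $\tau_q$-flatness to flatness. I would check flatness locally: $\prod_i F_i$ is $\tau_q$-flat, so $(\prod_i F_i)_\m$ is flat over $R_\m$ for every $\m\in q\text{-}\Max(R)$, and I would exploit $\tau_q$-divisibility—each $J\in\Q$ (in particular each regular element) acting invertibly on the flat, hence $\Q$-torsion-free, module $\prod_i F_i$—to control the remaining semi-regular maximal ideals, where the $q$-operation is trivial. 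Making this precise, i.e.\ proving that a $\tau_q$-flat, $\tau_q$-divisible, $\Q$-torsion-free module is flat, is the technical heart of the theorem and is where I expect the real difficulty to lie; it is the exact analogue of the $\GV$-divisible case in the $w$-coherent Chase theorem of \cite{zxl20}, and I would model the argument on that treatment.
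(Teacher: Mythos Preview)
Your cycle $(1)\Rightarrow(5)\Rightarrow(6)\Rightarrow(7)\Rightarrow(1)$ is essentially identical to the paper's proof: the same Mittag--Leffler argument for $(1)\Rightarrow(5)$ (your identification $\Tor_1^R(R/I,\prod F_i)=\Ker\phi_I$ is exactly the content of the paper's first diagram), and the same snake-lemma-plus-diagonal-element argument for $(7)\Rightarrow(1)$.

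The difference lies in the cluster $(1)\Leftrightarrow(2)\Leftrightarrow(3)\Leftrightarrow(4)$. The paper does not prove any of these implications directly; it simply cites \cite[Theorem 2.10]{WQ20} for the entire equivalence. Your treatment is more self-contained: the easy implications $(2)\Rightarrow(3)\Rightarrow(4)$ are as you say, and your $(4)\Rightarrow(1)$ via base change plus the classical Lenzing criterion over $\T(R[x])$ is a clean and correct alternative route that avoids quoting the external result wholesale. Your identification $I\otimes_R \T(R[x])^\Gamma \cong \bar I\otimes_{\T(R[x])}\T(R[x])^\Gamma$ is the standard base-change isomorphism, and the resulting injectivity of the Lenzing map for $\bar I$ does force $\bar I$ to be finitely presented over $\T(R[x])$, whence Theorem \ref{naga-fg-fp} finishes. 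This is a genuine and pleasant addition relative to the paper.

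You are also right that $(1)\Rightarrow(2)$ is where the substance hides: the Mittag--Leffler lemma only yields $\tau_q$-flatness of the product, and one must use $\tau_q$-divisibility to upgrade to honest flatness. The paper sidesteps this entirely by quoting \cite{WQ20}. Your proposed reduction to the statement ``$\tau_q$-flat $+$ $\tau_q$-divisible $+$ $\Q$-torsion-free $\Rightarrow$ flat'' is the natural formulation, but note that your sketch (localizing and handling semi-regular maximal ideals via $IM=M$) does not by itself produce flatness at those primes; one really does need an argument of the type in \cite{WQ20} or \cite{zxl20}, so your instinct to model it on those references is correct rather than optional.
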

\begin{proof} The equivalence $(1)\Leftrightarrow (2)\Leftrightarrow (3)\Leftrightarrow (4)$ follows from \cite[Theorem 2.10]{WQ20}.

$(5)\Rightarrow (6)\Rightarrow (7)$ Trivial.

$(1)\Rightarrow (5)$ Let $\{F_i\mid i\in \Lambda\}$ be a family of flat $R$-modules and $I$ a finitely generated ideal of $R$. Then $I$ is $\tau_q$-finitely presented.  Consider the following commutative diagram with rows exact:
$$\xymatrix{
0\ar[r]& \Tor_1^R(R/I,\prodi F_i)\ar[d]_{}\ar[r]^{} & I\otimes_R \prodi F_i \ar[d]^{\theta}\ar[r]^{} & R\otimes_R  \prodi F_i \ar[d]^{}\\
   0 \ar@{=}[r] & \prodi\Tor_1^R(R/I, F_i)\ar[r]^{} &\prodi (I\otimes_R F_i ) \ar[r]^{} & \prodi (R\otimes_R F_i),\\}$$
Note $\Tor_1^R(R/I,\prodi F_i)\subseteq \Ker(\theta)$. By Lemma \ref{w-iso}, $\Ker(\theta)$  is $\Q$-torsion, and so is $\Tor_1^R(R/I,\prodi F_i)$. Hence  $\prodi F_i$ is $\tau_q$-flat by Theorem \ref{w-coh-c-c}.

$(7)\Rightarrow (1)$ Let $I$ be a finitely generated ideal of $R$.  Consider the  following commutative diagram with rows exact:
$$\xymatrix{
&I\otimes_R \prodi R\ar[d]_{g}\ar[r]^{f} & R\otimes_R  \prodi R \ar[d]_{}^{\cong}\ar[r]^{} & R/I\otimes_R  \prodi R \ar[d]_{}^{\cong}\ar[r]^{} &  0\\
   0 \ar[r]^{} & \prodi (I\otimes_RR)\ar[r]^{} &\prod_{i\in I} (R\otimes_RR ) \ar[r]^{} & \prodi (R/I\otimes_RR ) \ar[r]^{} &  0.\\}$$
Since $\prodi R$ is a $\tau_q$-flat module, then $f$ is a $\tau_q$-monomorphism. So $g$ is also a $\tau_q$-monomorphism.

Let $0\rightarrow L\rightarrow F\rightarrow I\rightarrow 0$ be an exact sequence with $F$ finitely generated free.
 Consider the  following commutative diagram with rows exact:
$$\xymatrix{
&L\otimes_R \prodi R\ar[d]_{h}\ar[r]^{} & F\otimes_R  \prodi R \ar[d]_{}^{\cong}\ar[r]^{} & I\otimes_R  \prodi R \ar[d]_{}^{g}\ar[r]^{} &  0\\
   0 \ar[r]^{} & \prodi (L\otimes_RR)\ar[r]^{} &\prod_{i\in I} (F\otimes_RR ) \ar[r]^{} & \prodi (I\otimes_RR ) \ar[r]^{} &  0.\\}$$
Since $g$ is a  $\tau_q$-monomorphism, $h$ is a $\tau_q$-epimorphism. Set $\Lambda$  equal to the cardinal of $L$. We will show $L$ is $\tau_q$-finitely generated. Indeed, consider the following exact sequence
$$\xymatrix{
L\otimes_R  R^L \ar[rr]^{h}\ar@{->>}[rd] &&L^L \ar[r]^{} & T\ar[r]^{} &  0\\
    &\Im h \ar@{^{(}->}[ru] &&  &   \\}$$
with $T$ a $\Q$-torsion module.
Let $x=(m)_{m\in L}\in L^L$. Then there is an ideal $J=\langle u_1,\dots,u_k\rangle\in \Q$ such that $Jx\subseteq \Im \phi_L$.
Subsequently, there exist $m_j\in L, r_{j,i}\in R, i\in L, j=1,\cdots,n$
such that for each $t=1,\dots,k$, we have
$$u_tx=h(\sum_{j=1}^n m^t_j\otimes (r^t_{j,i})_{i\in L})=(\sum_{j=1}^n m^t_j r^t_{j,i})_{i\in L}.$$
Set $U=\langle m^t_j\mid j=1,\dots,n; t=1,\dots,k\rangle $ be the finitely generated submodule of $L$. Now, for any $m\in L$, $Jm\subseteq \langle \sum_{j=1}^n m^t_j r^t_{j,m}\mid t=1,\dots,k\rangle\subseteq  U$, thus the embedding map $U\hookrightarrow L$ is a $\tau_q$-isomorphism and so $L$ is  $\tau_q$-finitely generated. Consequently, $I$ is $\tau_q$-finitely presented.
\end{proof}

Following Theorem \ref{w-coh-chase}, all $\tau_q$-VN regular rings are  $\tau_q$-coherent. However, the converse does not hold in general. Indeed, let $R$ be the ring in Example \ref{tau-N-c-exam}. Then $R$ is a non-reduced $\tau_q$-coherent ring. So $R$ is not a $\tau_q$-VN regular ring by Theorem \ref{char-tvn}.

\begin{acknowledgement}\quad\\
The first author was supported by National Natural Science Foundation of China (No. 12061001). The second author was supported by National Natural Science Foundation of China (No. 12201361).
\end{acknowledgement}


\begin{thebibliography}{99}

\bibitem{DW09}  D. D. Anderson, M.  Winders, {\it Idealization of a module},  J. Commut. Algebra, \textbf{1} (2009), 3-56.


\bibitem{B03}  A. Badawi,  {\it On nonnil-Noetherian rings}, Communications in Algebra, \textbf{31} (2003),1669-1677.


\bibitem{CFFG14}  P. J. Cahen,  M. Fontana, S. Frisch, S. Glaz, {\it Open problems in commutative ring theory}, In: M. Fontana,  S. Frisch, S. Glaz, eds. Commutative Algebra. New York: Springer, 2014.


\bibitem{FS01} L. Fuchs, L. Salce, {\it Modules over Non-Noetherian Domains}, Providence: AMS, 2001.


\bibitem{H88}   J. A. Huckaba, {\it Commutative rings with zero divisors}. Monographs and Textbooks in Pure and Applied Mathematics, 117. Marcel Dekker, Inc., New York, 1988.

\bibitem{J15}  P. Jara, {\it Nagata rings}, Front. Math. China, \textbf{10} (2015), 91-110.


\bibitem{KW14}  H. Kim, F. G. Wang,  {\it On LCM-stable modules},  J. Algebra Appl., {\bf 13}(4) (2014), 1350133, 18 p.


\bibitem{S55} J. P. Serre, {\it Faisceaux alg\'{e}briques coh\'{e}rents}, Ann. of Math., {\bf 61}(2) (1955), 197-278.


\bibitem{fk10}  F. G. Wang, {\it Finitely presented type modules and $w$-coherent rings},  J. Sichuan Normal Univ., {\bf 33} (2010),1-9.

\bibitem{fk12}  F. G. Wang, {\it $w$-coherence in Milnor squares}, Acta Mathematics Sinica, Chinese Series, {\bf 55}(1) (2012), 65-76.

 \bibitem{fk14} F. G. Wang, H. Kim, {\it $w$-injective modules and $w$-semi-hereditary rings}, J. Korean Math. Soc.,  {\bf 51}(3) (2014), 509-525.

\bibitem{fk15} F. G. Wang, H.  Kim,  {\it Two generalizations of projective modules and their applications}, J. Pure Appl. Algebra,  {\bf 219}(6) (2015), 2099-2123.

\bibitem{fk16} F. G. Wang, H. Kim, {\it Foundations of Commutative Rings and Their Modules}, Singapore: Springer, 2016.

\bibitem{fm97}  F. G. Wang,  R. L. McCasland,  {\it  On $w$-modules over strong Mori domains},  Comm. Algebra, {\bf 25}(4) (1997), 1285-1306.

 \bibitem{WQ15} F. G. Wang,   L. Qiao,  {\it The $w$-weak global dimension of commutative rings}, Bull. Korean Math. Soc.,  {\bf 52}(4) (2015), 1327-1338.

\bibitem{WQ20} F. G. Wang,   L. Qiao,  {\it Two applications of Nagata rings and modules}, J. Algebra Appl. {\bf 19}(6) (2020), 2050115, 15 p.

\bibitem{wzcc20}  F. G. Wang, D. C. Zhou, D. Chen, {\it  Module-theoretic characterizations of the ring of finite fractions of a commutative ring}, J. Commut. Algebra, to appear. 

\bibitem{fkxs20} F. G. Wang, D. C. Zhou, H. Kim, T. Xiong,  X. W. Sun,  {\it  Every \Prufer\ ring does not have small finitistic dimension at most one}, Comm. Algebra, {\bf 48}(12) (2020), 5311-5320.

\bibitem{ywzc11}   H. Y. Yin, F. G. Wang,  X. S. Zhu,  Y. H. Chen, {\it $w$-modules over commutative rings}, J. Korean Math. Soc., {\bf48}(1) (2011), 207-222.

\bibitem{z-q0pvmr} X. L. Zhang, {\it  A homological characterization of $Q_0$-Pr\"{u}fer $v$-Multiplication Rings}, Int. Electron. J. Algebra, to appear. Arxiv: arxiv.org/abs/2111.02407.

\bibitem{zw-c-wch-2} X. L. Zhang, F. G. Wang,  {\it On characterizations of $w$-coherent rings II},  Comm. Algebra, {\bf49}(9) (2021), 3926-3940.


\bibitem{z-fpd}  X. L. Zhang, F. G. Wang, {\it The small finitistic dimensions of commutative rings}, J. Commut. Algebra, to appear. Arxiv: arxiv.org/abs/2103.08807.


\bibitem{zxl20} X. L. Zhang, F. G. Wang,  W. Qi, {\it  On characterizations of  $w$-coherent rings},  Comm. Algebra, {\bf 48}(11) (2020), 4681-4697.

\bibitem{ZDC20}  D. C. Zhou, H. Kim, F. G. Wang, D. Chen, {\it  A new semistar operation on a commutative ring and its applications},  Comm. Algebra, {\bf48} (9) (2020), 3973-3988.
\end{thebibliography}
\end{document}